\newtheorem{theorem}{Theorem}[section]
\newtheorem{lemma}[theorem]{Lemma}
\newtheorem{proposition}[theorem]{Proposition}
\newtheorem{definition}[theorem]{Definition}
\newtheorem{remark}[theorem]{Remark}
\newcommand\RR{{\Bbb R}}
\newcommand\CC{{\Bbb C}}
\newcommand\ZZ{{\Bbb Z}}
\newcommand\TT{{\Bbb T}}
\newcommand\supp{{\rm supp}}
\begin{document}
\title{Continuous Wavelets on Compact Manifolds}
\author{Daryl Geller\\
\footnotesize\texttt{{Department of Mathematics, Stony Brook University, Stony Brook, NY 11794-3651}}\\
\footnotesize\texttt{{daryl@math.sunysb.edu}}\\
 \thanks{This work was partially  supported by the Marie Curie Excellence Team Grant MEXT-CT-2004-013477, Acronym MAMEBIA.}
Azita Mayeli \\
% \thanks{The research for this paper was supported by the }
\footnotesize\texttt{{Department of Mathematics, Stony Brook University, Stony Brook, NY 11794-3651}}\\
\footnotesize\texttt{{amayeli@math.sunysb.edu}}}
%\date{\today}

%\keywords{ }
\maketitle

\begin{abstract}
Let $\bf M$ be a smooth compact oriented Riemannian manifold,
and let $\Delta_{\bf M}$ be the Laplace-Beltrami operator on ${\bf M}$. Say $0 \neq f \in
\mathcal{S}(\RR^+)$, and that $f(0) = 0$.  For $t > 0$, let $K_t(x,y)$ denote
the kernel of $f(t^2 \Delta_{\bf M})$. We show that $K_t$ is well-localized near the diagonal,
in the sense that it satisfies estimates akin to those satisfied by the kernel of the convolution
operator $f(t^2\Delta)$ on $\RR^n$.  We define
continuous ${\cal S}$-wavelets on ${\bf M}$, in such a manner that
$K_t(x,y)$ satisfies this definition, because of its localization near the diagonal.
Continuous ${\cal S}$-wavelets on ${\bf M}$ are analogous to continuous wavelets
on $\RR^n$ in $\mathcal{S}(\RR^n)$.  In particular, we are able to characterize
the H$\ddot{o}$lder continuous functions on ${\bf M}$ by the size of their continuous
${\mathcal{ S}}-$wavelet transforms,
for H$\ddot{o}$lder exponents strictly between $0$ and $1$.
If $\bf M$ is the torus $\TT^2$ or the sphere $S^2$, and $f(s)=se^{-s}$ (the ``Mexican hat'' situation),
we obtain two explicit approximate formulas
for $K_t$, one to be used when $t$ is large, and one to be used when $t$ is small.\\

\footnotesize{
 \begin{tabular}{lrl}
 Keywords and phrases:  &   \multicolumn{2}{l} {\em Frames, Wavelets, Continuous Wavelets, Spectral Theory, Schwartz Functions,}\\
  &  \hspace{-.1cm}  {\em  Time-Frequency  Analysis,  Manifolds, Sphere, Torus, Pseudodifferential Operators,}\\
 &  \hspace{-1cm}  {\em  H$\ddot{o}$der Spaces.}
  \end{tabular}
 \vspace{.3cm}

 \begin{tabular}{lrl}
  &&  \hspace{-1cm} AMS  Classification;  {42C40, 42B20, 58J40, 58J35, 35P05.}
\end{tabular}
}
 \end{abstract}

{\bf Table of Contents}
\begin{itemize}
\item
 Section \ref{sec:Introduction}: Introduction and Historical Comments
\item
 Section \ref{applying-the-spectral-theorem}: Applying the Spectral Theorem
\item
 Section \ref{preliminaries-on-manifolds}: Preliminaries on Manifolds
\item
 Section \ref{kernels}: Kernels
\item
 Section \ref{continuous-s-wavelets-on-manifolds}: Continuous ${\cal S}$-Wavelets  on Manifolds
\item
 Section \ref{homogeneous-manifolds}: Homogeneous Manifolds
 \item Section \ref{a-technical-lemma}: A Technical Lemma
\end{itemize}

\section{\large{Introduction}}
\label{sec:Introduction}

Let ${\mathcal S}(\RR^+)$ denote the space of restrictions to $\RR^+$ of functions in ${\mathcal S}(\RR)$.
Say $0 \not\equiv f_0 \in {\mathcal S}(\RR^+)$, and let
\[f(s) = sf_0(s).\]
One then has
the {\em Calder\'on formula} for $f$: if $c \in (0,\infty)$ is defined by
\[c = \int_0^{\infty} |f(t)|^2 \frac{dt}{t} = \int_0^{\infty} t|f_0(t)|^2 dt,\]
then for all $s > 0$,

\begin{equation}
\label{cald}
\int_0^{\infty} |f(ts)|^2 \frac{dt}{t} = c < \infty.
\end{equation}

The even function $h(\xi) = f(\xi^2)$ is then in ${\mathcal S}(\RR)$ and satisfies

\begin{equation}
\label{cald1}
\int_0^{\infty} |h(t\xi)|^2 \frac{dt}{t} = \frac{c}{2} < \infty.
\end{equation}

(In fact, all even functions in
${\mathcal S}(\RR)$ satisfying (\ref{cald1}) arise in this manner).

Its inverse Fourier transform $\psi = \check{h}$ is admissible (i.e. is a continuous wavelet).  That is,
for some $c' > 0$,
\begin{equation}
\label{ctswvrn0}
\int_0^{\infty} \|F*\psi_t\|^2_2 \frac{dt}{t} = c' \|F\|^2_2
\end{equation}
for all $F \in L^2(\RR^n)$.  Here, as usual, $\psi_t(x) = t^{-1} \psi(x/t)$.\\

 We prefer to write, formally,
\[ \check{h} = f(-d^2/dx^2) \delta; \]
the formal justfication being that

\[ (f(-d^2/dx^2) \delta)\hat{\:} = f(\xi^2) = h(\xi). \]
Thus $f(-d^2/dx^2) \delta$ is a continuous wavelet on $\RR$.

Our program is to construct analogues of continuous wavelets, on
much more general spaces, by replacing the positive number $s$ in (\ref{cald})
by a positive self-adjoint operator $T$ on a Hilbert space ${\cal H}$.
If $P$ is the projection onto the null space of $T$, by the spectral theorem
we obtain the relation

\begin{equation}
\label{gelmay1}
\int_0^{\infty} |f|^2(tT) \frac{dt}{t} = c(I-P),
\end{equation}

where the integral in (\ref{gelmay1}) converges strongly.  (\ref{gelmay1}) will be justified in the next section.

Taking $T$ to be $-d^2/dx^2$ on $\RR$ leads to the continuous wavelet $f(-d^2/dx^2) \delta$ on
$\RR$. (Of course, on $\RR$, $P = 0$.)

We began our program of looking at more general positive self-adjoint operators $T$,
in order to construct continuous wavelets, in our article \cite{gm1}.  There we took $T$
to be the sublaplacian $L$ on $L^2(G)$, where $G$ is a stratifed Lie
group, and thereby obtained continuous wavelets and frames on such $G$.
In fact, in that context, $f(L)\delta$ was a continuous wavelet. (Again, in that
context, $P$ was zero.)  (Our article \cite{gm1} was motivated by the second author's
thesis \cite{Mayelithesis05}, where it was shown that if $f(s) = se^{-s}$, then the
``Mexican hat'' $f(L)\delta$ is a continuous wavelet on the Heisenberg group.)

In this article we will look at the (much more practical!)
situation in which $T$ is the Laplace-Beltrami operator on $L^2({\bf M})$,
where ${\bf M}$ is a smooth compact oriented Riemannian
manifold without boundary.  We will construct analogues of continuous Schwartz wavelets in
this context, and will obtain explicit formulas for them if
${\bf M}$ is the sphere or the torus.  In a sequel article (\cite{gmfr}) we shall use a discrete analogue of (\ref{gelmay1}) to
construct nearly tight frames in
this situation, and show that they are appropriately well-localized (specifically, that
they satisfy a space-frequency analysis).  In both of these articles, $P$ will be the projection onto the
one-dimensional space of constant functions.

We now summarize our results and methods.  We discuss prior work on wavelets on manifolds, especially
the work of Narcowich, Petrushev and Ward on the sphere \cite{narc1}, \cite{narc2}, at the end of this
introduction.
\\

In the model situation (\ref{cald}) -- (\ref{ctswvrn0}) on $\RR$,
note that the kernel of the operator $F \rightarrow F*\psi_t$ is $K_t(x,y) := t^{-1} \psi((x-y)/t)$.
Since $h(0)=0$, we have
$\int K_t(x,y) dx = 0$ for all $y$ and $\int K_t(x,y) dy = 0$ for all $x$. Also $K_t(x,y)$ is smooth in $t,x,y$ for
$t > 0$.

Motivated by this model case, we define continuous wavelets on ${\bf M}$ as follows.
Suppose that the function $K_t(x,y)$ is smooth for $t > 0$, $x,y \in {\bf M}$.
For $t > 0$, define $T_t: L^2({\bf M}) \rightarrow C^{\infty}({\bf M})$ to be the operator with kernel $K_t$,
We define $K_t(x,y)$ to be a {\em continuous wavelet} on ${\bf M}$, provided that for some $c > 0$,
\begin{equation}
\label{ttfiso}
\int_0^{\infty} \|T_t F\|^2_2 \frac{dt}{t} = c \|(I-P)F\|^2_2,
\end{equation}
for all $F \in L^2({\bf M})$,
and that for any $t > 0$, $\int_{\bf M} K_t(x,y) d\mu(x) = 0$ for all $y$ and $\int_{\bf M} K_t(x,y) d\mu(y) = 0$ for all $x$.
(Here $\mu$ is the measure on ${\bf M}$ arising from integration with respect to the
volume form.)

It is then easy to see that, if $K_t$ is the kernel of $f(t^2\Delta)$ ($f$ as before),
where $\Delta = \Delta_{\bf M}$ is the Laplace-Beltrami operator on ${\bf M}$, then $K_t(x,y)$ is a
continuous wavelet on ${\bf M}$.  In particular, (\ref{ttfiso}) follows easily from (\ref{gelmay1}), simply by
applying both sides of (\ref{gelmay1}) to $F$, taking the inner product of both sides with $F$, and making the change of
variables which replaces $t$ by $t^2$ in the integral.  Note that $K_t(x,y) = [f(t^2\Delta)\delta_y](x)$,
which is analogous to $f(-d^2/dx^2) \delta$ being a continuous wavelet on $\RR$.\\

This then gives an $L^2$ theory of continuous wavelets on manifolds.  However, in practice, one
wishes to go beyond this theory, looking at continuous wavelets on other function spaces, and discretizing
them to obtain frames.  For this, one needs the wavelets to have further properties.  Fortunately,
as we shall see, if $K_t$ is the kernel of $f(t^2\Delta)$, these properties are present.

Specifically, let us return to our model situation; let us however work on $\RR^n$, not just
$\RR^1$.  Then $K_t(x,y)$, the kernel of $f(t^2\Delta)$, would be of the
form $t^{-n} \psi((x-y)/t)$ for some $\psi \in \mathcal{S}$.
(Here $\Delta$ is the usual Laplacian on $\RR^n$, and $\hat{\psi} = G$, where $G(\xi) = f(|\xi|^2)$.)
For any $N, \alpha, \beta$,
there would thus exist $C_{N, \alpha, \beta}$ such that
\begin{equation}\notag
%\label{rnmod}
t^{n+|\alpha|+|\beta|} \left|\left(\frac{x-y}{t}\right)^N \partial_x^{\alpha} \partial_y^{\beta} K_t(x,y)\right|
\leq C_{N, \alpha, \beta}
\end{equation}
for all $t, x, y$.  Such estimates are essential in the theory of wavelets on $\RR^n$.  We therefore
make the following definition:

\begin{definition}
\label{ctsswav}
Let $K_t(x,y)$ be a continuous wavelet on ${\bf M}$.  Then we say that $K_t(x,y)$ is a {\em continuous
${\cal S}$-wavelet} on ${\bf M}$ provided that:\\
\ \\
For every pair of
$C^{\infty}$ differential operators $X$ (in $x$) and $Y$ (in $y$) on ${\bf M}$,
and for every integer $N \geq 0$, there exists $C_{N,X,Y}$ as follows.  Suppose
$\deg X = j$ and $\deg Y = k$.  Then
\begin{equation}
\label{xykt}
t^{n+j+k} \left|\left(\frac{d(x,y)}{t}\right)^N XYK_t(x,y)\right| \leq C_{N,X,Y}
\end{equation}
for all $t > 0$ and all $x,y \in {\bf M}$.
\end{definition}

Here $d$ is the geodesic distance on ${\bf M}$.  We use the terminology ``continuous ${\cal S}$-wavelet''
to express the idea that these wavelets are analogous to Schwartz wavelets on $\RR^n$.\\
\ \\
In Lemma \ref{manmol}, we will show the following key result:\\
\ \\
{\bf Lemma \ref{manmol}} If $K_t$ is the kernel of $f(t^2\Delta)$ ($f$ as before),
then $K_t(x,y)$ satisfies (\ref{xykt}) (and hence is a continuous ${\cal S}$-wavelet on ${\bf M}$).\\
\ \\
Our proof of this lemma uses the theory of pseudodifferential operators (most crucially,
a result of Strichartz \cite{Stric72}), and Huygens' principle.

In Theorem \ref{hldchar} we shall show that
continuous ${\cal S}$-wavelets are well adapted to the study of certain other function spaces.
Specifically, we shall show the following generalization of a theorem of
Holschneider and Tchamitchian (\cite{hotch}), who worked on the real line, and
characterized the wavelet transforms of the
spaces of H\"older continuous functions (for H\"older exponent strictly between $0$ and $1$).\\
\ \\
{\bf Theorem \ref{hldchar}}\\ \textit
{Let $K_t(x,y)$ be a continuous ${\cal S}$-wavelet on ${\bf M}$, and, for $t > 0$,
let $T_t$ be the operator on $L^2$ with kernel $K_t$.
Suppose $F \in L^2({\bf M})$.  Then:\\
(a) If $F$ is H\"older continuous, with H\"older exponent $\alpha$ ($0 < \alpha \leq 1$), then for
some $C > 0$,
\begin{equation}
\label{hldcharway0}
\|T_t F\| \leq C t^{\alpha}
\end{equation}
for all $t > 0$.  (Here $\|\:\|$ denotes sup norm.)\\
(b) Conversely, say $0 < \alpha < 1$, $C > 0$, and that $F$ satisfies (\ref{hldcharway0})
for all $t > 0$.  Then $F$ is H\"older continuous, with H\"older exponent $\alpha$.}\\
\ \\
The proof of this result will be a straightforward generalization of the argument of
Holschneider and Tchamitchian (\cite{hotch}).

Our construction of nearly tight frames in our sequel article \cite{gmfr} will also
rely heavily on Lemma \ref{manmol}.
In another sequel article (already available \cite{gm3}), we will use Lemma \ref{manmol}
to show that one can determine whether
$F$ is in a Besov space, solely from a knowledge of the size of its frame coefficients.
In a future article, we hope to study the same question for Triebel-Lizorkin spaces.
(The analogous problems on $\RR^n$ were solved in \cite{fj} and \cite{fj2}.) \\

It should be noted that in (\ref{xykt}) we assume nothing about the $t$ derivatives
of $K_t(x,y)$, and no such information is needed in proving Theorem \ref{hldchar}.  However,
in fact, if $K_t$ is the kernel of $f(t^2\Delta)$, one does have the following improvement
on (\ref{xykt}):\\
\ \\
For every pair of
$C^{\infty}$ differential operators $X$ (in $x$) and $Y$ (in $y$) on ${\bf M}$,
and for all integers $m, N \geq 0$, there exists $C_{N,m,X,Y}$ as follows.  Suppose
$\deg X = j$ and $\deg Y = k$.  Then
\begin{equation}
\label{diagest3}
t^{m+n+j+k} \left|\left(\frac{d(x,y)}{t}\right)^N \left(\frac{\partial}{\partial t}\right)^m XYK_t(x,y)\right| \leq C_{N,m,X,Y}
\end{equation}
for all $t > 0$ and all $x,y \in {\bf M}$. \\

This is in fact an easy consequence of (\ref{xykt}).  For example,
$\frac{\partial}{\partial t}K_t(x,y)$ is the kernel of $2t \Delta f(t^2 \Delta)$,
and hence equals $2t \Delta_x K_t(x,y)$, which we can estimate by using (\ref{xykt}).\\

We should explain in what sense $K_t(x,y)$ (the kernel of $f(t^2\Delta)$)
deserves to be called a {\em wavelet}.  In our model situation on $\RR^n$,
$K_t(x,y) = t^{-n} \psi((x-y)/t)$ behaves in evident, well-known ways under dilation and translation:
\begin{eqnarray}
K_{rt}(rx,ry) & = & r^{-n}K_t(x,y) \mbox{ for } r > 0; \mbox{ and } \label{ktrx} \\
K_t(x+a,y+a) & = & K_t(x,y) \mbox{ for } a \in \RR^n \label{kta}
\end{eqnarray}

Now (\ref{ktrx}) says that, in the model case, $K_t(x,y)$ is homogeneous of degree $-n$ in $(t,x,y)$,
so that for any $m, \alpha, \beta$,
$t^{n+m+|\alpha|+|\beta|} \partial_t^m \partial_x^{\alpha} \partial_y^{\beta} K_t(x,y)$
is homogeneous of degree $0$, and is hence bounded as a function of $(t,x,y)$.
An analogous fact to this boundedness holds on ${\bf M}$, by (\ref{diagest3}) with $N=0$.

As for (\ref{kta}), a general manifold ${\bf M}$ has nothing
akin to translations, but in Section 5 we shall discuss
the situation in which ${\bf M}$ has a
transitive group $G$ of smooth metric isometries.  In that case one can easily see
that $K_t(Tx,Ty) = K_t(x,y)$ for all $T \in G$, which is analogous to (\ref{kta}).
(Manifolds with such a group $G$ are usually called {\em homogeneous}.
Obvious examples are the sphere and the torus.)\\
\ \\
Of course, to apply our algorithm in practice, one would need to
compute the kernels $K_t$ approximately.  We give some examples
where this can be done, in Section 5.
In the special case $f(s) = se^{-s}$, $K_t$ can be thought of,
very naturally, as a
``Mexican hat'' continuous wavelet for ${\bf M}$. (On $\RR^n$,
the Mexican hat wavelet is a multiple of $\Delta e^{-\Delta} \delta$, the
second derivative of a Gaussian, the
function whose Fourier transform is $|\xi|^2 e^{-|\xi|^2}$.)  In Section 5
we compute these continuous wavelets in the special cases where ${\bf M}$ is the
$n$-sphere $S^n$ and the $n$-torus $T^n$.   For the $2$-torus, we show that $K_t$ can
be evaluated by use of either of two different sums, one
(obtained from the eigenfunction expansion) which converges
very quickly for $t$ large, and the other which (obtained from the proof
of the Poisson summation formula) converges very quickly for $t$ small.  (The
method can be extended to general $n$.)
For the sphere, the eigenfunction expansion again gives a sum which converges
very quickly for $t$ large.  When $n=2$, for $t$ small, by use of heat trace methods,
we obtain a formula which converges very quickly, and which appears (from
numerical evidence) to be an excellent approximation to $K_t$.
Specifically, in \cite{Polt}, I. Polterovich obtains a completely explicit formula for
the heat trace asymptotics on the sphere.  (Earlier, less explicit formulae were
found earlier in (\cite{CahnWolf}) and (\cite{Camp}).)  It is not hard to see that, on
the sphere, $K_t(x,y) := h_t(x \cdot y)$ is a function of $x \cdot y$.  Using
Polterovich's result, we show how one can compute the Maclaurin series for
$4\pi h_t(\cos \theta)$.  In this manner we obtain an approximation
\begin{equation}
\label{htapp0}
4\pi h_t(\cos \theta) \sim \frac{e^{-\theta^2/4t^2}}{t^2}[(1-\frac{\theta^2}{4t^2})p(t,\theta)-t^2q(t,\theta)],
\end{equation}
where
\begin{equation}\notag
%\label{htapp01}
p(t,\theta)=1+\frac{t^2}{3}+\frac{t^4}{15}+\frac{4t^6}{315}+\frac{t^8}{315}+
\frac{\theta^2}{4}(\frac{1}{3}+\frac{2t^2}{15}+\frac{4t^4}{105}+\frac{4t^6}{315})
\end{equation}
and
\begin{equation}\notag
%\label{htapp02}
q(t,\theta)= \frac{1}{3}+\frac{2t^2}{15}+\frac{4t^4}{105}+\frac{4t^6}{315}+
\frac{\theta^2}{4}(\frac{2}{15}+\frac{8t^2}{105}+\frac{4t^4}{105})
\end{equation}
Maple says that when $t=.1$, the error in the approximation (\ref{htapp0}) is never more than $9.5 \times 10^{-4}$
for any $\theta \in [-\pi,\pi]$, even though both sides have a maximum of about 100.
(To obtain rigorous bounds on the error is research in progress, which we expect to complete soon.)
We derive a similar approximation to the heat kernel itself, when $n=2$.  The method can be
extended to general $n$. Note that, if in (\ref{htapp0})
we approximate $p \sim 1$ and $q \sim 0$, we would obtain the formula for the usual Mexican hat wavelet
on the real line, as a function of $\theta$.

\subsection{Historical comments}
A number of groups of researchers have been studying continuous wavelets and frames on manifolds, and
some have obtained important real-world applications.  While our method, based on the new formula
(\ref{gelmay1})  is original, certain of the ideas that
we have presented in this introduction have arisen in other forms before.
We now discuss the work of these other researchers.  \\

Weaker forms of Lemma \ref{manmol} have appeared before; here is the history, as best we can
determine it.  If $f$ has compact support away from $0$, in 1989, Seeger-Sogge \cite{SS} showed (\ref{xykt}) modulo
a remainder term that they must handle separately.  (We would not be able to handle this
remainder in the applications we seek.)  Next, in 1996, Tao  showed (\cite{Tao}. Proposition
3.1 (ii)) the case $j=k=0$ of Lemma \ref{manmol}, under the restriction that
$\hat{h}$ has compact support.  (Recall that $h(\xi) = f(\xi^2)$.  An assumption that
$\hat{h}$ has compact support would not be natural in our context.)    \\

Most relevantly, in 2006
Narcowich, Petrushev and Ward (\cite{narc1}, \cite{narc2}) showed a slight variant of Lemma \ref{manmol}
if ${\bf M} = S^n$, the sphere, provided $f$ had compact support away from $0$.  (In their variant,
$K_t(x,y)$ was the kernel not of $f(t^2\Delta)$ but rather of $f(t^2{\cal M})$,
where ${\cal M}$ is a particular first-order pseudodifferential operator which is similar to $\Delta$.
Specifically, $\Delta$ multiplies spherical harmonics of degree $l$ by $l(l+n-1)$, while ${\cal M}$
multiplies them by $l^2$.  This is a minor distinction, however.)

Narcowich, Petrushev and Ward do not discuss continuous wavelets, but use spectral theory
arguments to construct tight frames on $S^n$.  They then apply their variant of Lemma \ref{manmol}
for purposes similar to ours, including characterizations of Besov and Triebel-Lizorkin spaces
through frame coefficients on the sphere.  Their frames have been dubbed ``needlets'', and have been used by
statisticians and astrophysicists
to study cosmic microwave background radiation (CMB).  (See, for instance, \cite{mar}, \cite{baldi}, \cite{guil}
and the references therein.)  We present a detailed comparison of their approach to frames and ours in our
sequel article \cite{gmfr}.

Returning to our own approach, but still restricting to the sphere,
the most important cases to consider are the case in which $f$ has compact support away from $0$ (the
``needlet '' case, essentially considered by Narcowich, Petrushev and Ward), and the case in which $f(s) = s^r e^{-s}$ for some
integer $r \geq 1$.  In our sequel article \cite{gmfr}, we construct frames from the latter $f$;
we call these frames {\em Mexican needlets}.
Needlets and Mexican needlets each have their own advantages.  Needlets are a tight frame,
and frame elements at non-adjacent scales are orthogonal.  Mexican needlets, though not tight,
are nearly tight; they have the advantage that one can work with them directly on the sphere,
because of the formula (\ref{htapp0}).  (This formula is only for $r = 1$, but can be readily
generalized to general $r$.  As we said before, estimating the error in (\ref{htapp0}) is work in progress.)
Assuming this formula, Mexican needlets have strong Gaussian decay at each scale,
and do not oscillate (for small $r$), so they can be implemented directly on the sphere, which is
desirable if there is missing data (such as the ``sky cut'' of the CMB). \\

The statistical properties of needlets were investigated in \cite{mar}. Also,
the statistical properties of Mexican needlets are already being investigated, by the second author
in \cite{mayuncor}, and by Lan and Marinucci in \cite{lanmar}.

It would be worthwile to utilize both needlets and Mexican needlets in the analysis of CMB, and the
results should be compared.\\

A number of other researchers have studied wavelets and frames on manifolds.
In all of the works mentioned below, when orthonormal bases were constructed, they
are not known to give rise to a space-frequency analysis; and when frames were constructed,
they are not known to be tight or to give rise to a space-frequency analysis.\\

Let us begin by discussing earlier works on manifolds, which contain some ideas related to those
in this article.  In alphabetical order:\\
\begin{itemize}
\item  Antoine, Vandergheynst, and collaborators (\cite{ant}, \cite{bogant}) have constructed
smooth continuous
 wavelets on the sphere and related manifolds, by use of stereographic dilations (replacing the
 usual dilations), rotations (replacing the usual translations), and spherical convolution (replacing
the usual convolution).  They obtained frames by discretizing these continuous wavelets.
\item
   Coifman, Maggioni, and collaborators (\cite{coimag}, \cite{mag2}) used the heat equation on
manifolds for the rather different purpose of constructing orthonormal wavelet bases through a diffusion process,
 leading to a multiresolution analysis.  They exploit the idea (which they attribute to Stein) of $e^{-t\Delta}$
 being a sort of dilate of $e^{-\Delta}$.
\item
Freeden and collaborators (\cite{free1}, \cite{free2}) defined continuous wavelets
on the sphere $S^2$, and applied them to the geosciences.  Their continuous wavelets were of the form $f(t^2\Delta)$
for various $f$ (not all in the Schwartz space), although they did not formulate them in that manner.
One of their many examples was our Mexican hat wavelet, which they called the Gauss-Weierstrass wavelet
of order zero.   They did not have Lemma
\ref{manmol}, so they restricted to an (extensive) $L^2$ theory of continuous
wavelets.  In the context of $S^2$, they had, in particular, results equivalent to our (\ref{gelmay1}).
 \item
  Han  (\cite{han1}, \cite{han2}) constructed frames on general spaces of homogeneous type (including manifolds).
  His method is to discretize a discrete version of Calder\'on's formula
 in this general setting.  He also used the $T(1)$ theorem to estimate errors,
 \end{itemize}

The following researchers have also worked on wavelets and frames on manifolds.  They used methods which
seem unrelated to those in the present article.  In alphabetical order:
\begin{itemize}
 \item
  Dahlke (\cite{dahl}) constructed an analogue of Haar wavelets on Riemannian manifolds.
\item  Dahmen and Schneider (\cite{dahsch})  have used parametric lifings from standard bases on the unit cube
 to obtain wavelet bases on manifolds which are the disjoint union of smooth parametric images of the standard cube.
\item
 Schr\"oder and Sweldens (\cite{swel}) used a lifing scheme to build wavelets on manifolds.
 This lifting scheme uses no invariance properties, and regularity information is not easily obtained.
 \end{itemize}

\section{Applying the Spectral Theorem}\label{applying-the-spectral-theorem}
In this section, we give the proof of (\ref{gelmay1}), as well as the proof
of a discrete analogue, which will be used in our sequel article \cite{gmfr}
to construct nearly tight frames.  (The proofs are
quite elementary, and the reader who is willing to accept (\ref{gelmay1}) can go
on to the next section.)  Specifically, we shall show:

\begin{lemma}
\label{gelmaylem}
Let $T$ be a positive self-adjoint operator on a Hilbert space ${\cal H}$, and
let $P$ be the projection onto the null space of $T$.
Suppose $l \geq 1$ is an integer, $f_0 \in {\cal S}(\RR^+)$,
$f_0 \not\equiv 0$, and let $f(s) = s^l f_0(s)$.
Set $c = \int_0^{\infty} |f(t)|^2 \frac{dt}{t}$.
\begin{itemize}
\item[$(a)$] Then
for any $F \in {\cal H}$,
\begin{equation}
\label{strint}
\lim_{\varepsilon \rightarrow 0^+, N \rightarrow \infty}
\left[\int_{\varepsilon}^{N} |f|^2(tT) \frac{dt}{t}\right]F = c(I-P)F,
\end{equation}
Thus
\[ \int_0^{\infty} |f|^2(tT) \frac{dt}{t} :=
 \lim_{\varepsilon \rightarrow 0^+, N \rightarrow \infty}
\left[\int_{\varepsilon}^{N} |f|^2(tT) \frac{dt}{t}\right] \]
exists in the strong operator topology, and equals $c(I-P)$.
\item[$(b)$]
Suppose that $a > 0$  $(a \neq 1)$
is such that the {\em Daubechies condition} holds: for any $s > 0$,
\begin{equation}
\label{daubrep}
0 < A_a \leq \sum_{j=-\infty}^{\infty} |f(a^{2j} s)|^2 \leq B_a < \infty,
\end{equation}
Then
$\lim_{M, N \rightarrow \infty}
\left[\sum_{j=-M}^N |f|^2(a^{2j} T)\right]$
exists in the strong operator topology on ${\cal H}$; we denote this limit by
$\sum_{j=-\infty}^{\infty} |f|^2(a^{2j} T)$.  Moreover
\begin{equation}
\label{strsum1}
A_a(I-P) \leq \sum_{j=-\infty}^{\infty} |f|^2(a^{2j} T) \leq
B_a(I-P).
\end{equation}
\end{itemize}
\end{lemma}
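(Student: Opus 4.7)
The plan is to prove both statements via the spectral theorem, reducing each assertion to a pointwise statement about a scalar function of the spectral parameter, and then invoking the dominated convergence theorem against the scalar spectral measures. Let $T = \int_0^\infty \lambda \, dE(\lambda)$ be the spectral resolution, and for $F \in {\cal H}$ let $\mu_F$ denote the finite Borel measure on $[0,\infty)$ defined by $\mu_F(A) = \langle E(A) F, F\rangle$. Note that $\mu_F(\{0\}) = \|PF\|^2$ and $\mu_F((0,\infty)) = \|(I-P)F\|^2$. Throughout, the hypothesis $l \geq 1$ ensures $f(0) = 0$, which is what will produce the $(I-P)$ factor.

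For part (a), I first identify the strong operator integral $S_{\varepsilon, N} := \int_\varepsilon^N |f|^2(tT) \frac{dt}{t}$ with the functional calculus operator $\phi_{\varepsilon, N}(T)$, where $\phi_{\varepsilon, N}(\lambda) = \int_\varepsilon^N |f(t\lambda)|^2 \frac{dt}{t}$. This identification follows because, for each $F$, Fubini's theorem (all integrands non-negative) gives
\[ \langle S_{\varepsilon, N} F, F\rangle = \int_\varepsilon^N \!\!\int_0^\infty |f(t\lambda)|^2 \, d\mu_F(\lambda) \, \frac{dt}{t} = \int_0^\infty \phi_{\varepsilon, N}(\lambda) \, d\mu_F(\lambda), \]
and both $S_{\varepsilon, N}$ and $\phi_{\varepsilon, N}(T)$ are bounded positive self-adjoint operators, so equality of quadratic forms implies equality as operators. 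For $\lambda > 0$, substituting $u = t\lambda$ yields $\phi_{\varepsilon, N}(\lambda) = \int_{\varepsilon \lambda}^{N\lambda} |f(u)|^2 \frac{du}{u}$, which is monotone in $(\varepsilon, N)$, uniformly bounded by $c$, and converges to $c$ as $\varepsilon \to 0^+$ and $N \to \infty$; at $\lambda = 0$ it vanishes identically. Writing $c(I-P) = (c \chi_{(0,\infty)})(T)$ via the functional calculus, I then have
\[ \|S_{\varepsilon, N} F - c(I-P) F\|^2 = \int_0^\infty |\phi_{\varepsilon, N}(\lambda) - c\chi_{(0,\infty)}(\lambda)|^2 \, d\mu_F(\lambda), \]
and since the integrand is bounded by $c^2$ and tends to $0$ pointwise $\mu_F$-a.e., the dominated convergence theorem closes the argument.

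Part (b) is handled by the same template. Setting $S_{M,N} := \sum_{j=-M}^N |f|^2(a^{2j} T)$, the functional calculus gives $S_{M,N} = \phi_{M,N}(T)$ with $\phi_{M,N}(\lambda) = \sum_{j=-M}^N |f(a^{2j} \lambda)|^2$. For $\lambda > 0$ the Daubechies hypothesis (\ref{daubrep}) says the full series converges to some $\sigma(\lambda) \in [A_a, B_a]$; since all terms are non-negative, the partial sums increase monotonically to $\sigma(\lambda)$ and are uniformly bounded by $B_a$. For $\lambda = 0$ every term vanishes, so set $\sigma(0) = 0$. Then, as in part (a),
\[ \|S_{M,N} F - \sigma(T) F\|^2 = \int_0^\infty |\phi_{M,N}(\lambda) - \sigma(\lambda)|^2 \, d\mu_F(\lambda) \to 0 \]
by dominated convergence (bound $B_a^2$), establishing strong convergence to the operator I denote $\sum_j |f|^2(a^{2j} T) := \sigma(T)$. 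The frame bounds (\ref{strsum1}) then follow by integrating the pointwise inequalities $A_a \chi_{(0,\infty)}(\lambda) \leq \sigma(\lambda) \leq B_a \chi_{(0,\infty)}(\lambda)$ against $d\mu_F$ and using $\mu_F((0,\infty)) = \|(I-P)F\|^2$.

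The proof is essentially bookkeeping with the spectral theorem; the only point requiring care is the Fubini-type interchange in part (a) that identifies the strong operator integral with a functional calculus operator. Once that is in hand, everything reduces to two pointwise facts about the scalar functions $\phi_{\varepsilon, N}$ and $\phi_{M,N}$: they are uniformly bounded in the spectral parameter, and they converge pointwise to the expected limits. Dominated convergence against the finite measure $\mu_F$ does the rest.
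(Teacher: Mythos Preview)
Your proof is correct and follows essentially the same spectral-theoretic template as the paper: write $T=\int_0^\infty \lambda\,dE(\lambda)$, use Fubini to identify the operator integral $\int_\varepsilon^N |f|^2(tT)\,\frac{dt}{t}$ with $\phi_{\varepsilon,N}(T)$ for the scalar function $\phi_{\varepsilon,N}(\lambda)=\int_\varepsilon^N |f(t\lambda)|^2\,\frac{dt}{t}$, and then pass to the limit. The one genuine difference is in how the limit is taken. You invoke dominated convergence against the scalar spectral measure $\mu_F$, which is clean and entirely sufficient for the lemma as stated. The paper instead argues via density: it restricts to vectors $F\in P_{[\eta,L]}\mathcal{H}$ and uses \emph{uniform} convergence of $\phi_{\varepsilon,N}$ to $c$ on the compact interval $[\eta,L]$, together with the uniform operator bound $\|\phi_{\varepsilon,N}(T)\|\leq c$. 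The paper's route is slightly less direct, but it is chosen deliberately: the uniform-convergence estimates are recorded quantitatively in the subsequent Lemma~\ref{uncon} and then fed into Lemma~\ref{gelmayquant}, which gives explicit rates of convergence needed for numerical considerations and for the frame constructions in the sequel paper. Your dominated-convergence argument does not by itself yield those rates, though of course the quantitative bounds could be established separately.
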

{\bf Remark} $(b)$ is a discrete analogue of $(a)$, since the sum in (\ref{daubrep}) is a multiple
of a Riemann sum for the integral $\int_0^{\infty} |f(st)|^2 \frac{dt}{t} = c$, while
the spectral theorem will show that it is valid to replace $s$ in (\ref{daubrep}) by $T$, to obtain
(\ref{strsum1}).
\begin{proof}  We prove $(a)$ and $(b)$ together.
Let $T = \int_0^{\infty} \lambda dP_{\lambda}$
be the spectral decomposition of $T$; thus, in particular, $P = P_{\{0\}}$.

Observe that, by the spectral theorem, if $g$ is a
bounded Borel function on $\RR^+$, then $\|g(T)\| \leq
\sup_{s \geq 0} |g(s)|$.
It follows readily that the integrand in (\ref{strint}) is a family
of operators in ${\cal B}({\cal H})$ (indexed by $t$),
which depends continuously on $t \in [\varepsilon,N]$ (in the
norm topology on ${\cal B}({\cal H})$).
(Use the mean value theorem for $s$ in a suitable compact interval
and the rapid decay of $f$ at $\infty$.)
 Thus
$\int_{\varepsilon}^{N} |f|^2(tT) \frac{dt}{t}$
makes sense as a bounded operator on ${\cal H}$.  (As usual, it
is defined as the unique bounded operator $S$ with $ \langle  S\varphi,\psi  \rangle  =
\int_{\varepsilon}^{N}  \langle  |f|^2(tT)\varphi,\psi  \rangle  \frac{dt}{t}$
for all $\varphi,\psi \in {\cal H}$.)

For $0 < \varepsilon < N < \infty$, define $g_{\varepsilon,N}: [0,\infty) \rightarrow [0,\infty)$ by
\begin{equation}
\label{gepN}
g_{\varepsilon,N}(\lambda) = \int_{\varepsilon}^{N} |f|^2(t\lambda) \frac{dt}{t}.
\end{equation}
Then
\begin{equation}\notag
%\label{gepNT}
g_{\varepsilon,N}(T) =  \int_{\varepsilon}^{N} |f|^2(tT) \frac{dt}{t}.
\end{equation}
(This follows from an elementary application of Fubini's theorem to
$\int_{\varepsilon}^{N} \int_0^{\infty} |f|^2(t\lambda)
d \langle  P_{\lambda}\varphi,\psi  \rangle  \frac{dt}{t}$.)

For $M,N \geq 0$ we also define $h_{M,N}: [0,\infty) \rightarrow [0,\infty)$ by
\begin{equation}\notag
%\label{hepN}
h_{M,N}(\lambda) = \sum_{j=-M}^N |f|^2(a^{2j} \lambda);
\end{equation}
and we also set
\begin{equation}
\label{hla}
h(\lambda) = \sum_{j=-\infty}^{\infty} |f|^2(a^{2j} \lambda).
\end{equation}

To prove the lemma it is enough to show that $g_{\varepsilon,N}(T) \rightarrow c(I-P)$
(strongly, as $\varepsilon \rightarrow 0^+$ and $N \rightarrow \infty$) and
that $h_{M,N}(T) \rightarrow h(T)$
(strongly, as $M,N \rightarrow \infty$).
Indeed, $(a)$ would then be immediate. $(b)$ would then also follow at once
from (\ref{daubrep}), which implies, by the spectral theorem, that
$A_a(I-P) \leq h(T) \leq B_a(I-P)$.\\

To establish these conclusions, we first note that this strong convergence
need only be proved on $(I-P){\cal H}$, since all the operators vanish identically
on $P{\cal H}$.  Next we note that
for any $\varepsilon, M, N$,
 $\|g_{\varepsilon,N}\|_{\infty}
\leq c$, $\|h_{M,N}\|_{\infty} \leq B_a$, and $\|h\|_{\infty} \leq B_a$,
whence
$||g_{\epsilon,N}(T)||
\leq c$, $||h_{M,N}(T)||_{\infty} \leq B_a$, and $||h(T)|| \leq B_a$,

Thus the needed strong convergence need only be proved on a dense subset
of $(I-P){\cal H}$.

Set
\[ V = \bigcup_{0 < \eta < L < \infty} P_{[\eta,L]}\mathcal{H};\]
observe that $V$ is dense in $(I-P)\mathcal{H}$ (since $P = P_{\{0\}}$).

Thus, it suffices to show the following: fix $0 < \eta < L < \infty$,
and say $F \in P_{[\eta,L]}\mathcal{H}$.  Then
$g_{\varepsilon,N}(T)F \rightarrow cF$ and
$h_{M,N}(T)F \rightarrow h(T)F$.

This, however, is immediate from the spectral theorem
and the evident facts that $g_{\epsilon,N}
\rightarrow c$ and $h_{M,N} \rightarrow h$, {\em uniformly}
on $[\eta,L]$.   Although this uniform convergence is easily shown,
for later purposes, we carefully
express it quantitatively in the next lemma.  (Note that we may assume
$a > 1$; otherwise replace it by $1/a$.)\end{proof}

\begin{lemma}
\label{uncon}
Notation as in Lemma $\ref{gelmaylem}$, and as in equations
$(\ref{gepN})$  through $(\ref{hla})$.   Suppose $J \geq 1$ is an integer,
and let $M_J = \max_{r > 0}|r^J f(r)|$.  Suppose $0 < \eta < L < \infty$,
and let $I$ be the closed interval $[\eta,L]$.  Let $\|\:\|$ denote
the sup norm on $I$.
\begin{itemize}
\item[$(a)$] If $0 < \varepsilon < N$, then
\[ \|g_{\varepsilon,N} - c\| \leq c_L \varepsilon^{2l} + \frac{C_{\eta}}{N^{2J}}, \]
where we may take $c_L = L^{2l}\|f_0\|^2_{\infty}/(2l)$,
and $C_{\eta} = M_J^2/(2J\eta^{2J})$.
\item[$(b)$] If $M,N \geq 0$, and $a > 1$, then
\[ \|h_{M,N} - h\| \leq \frac{c^{\prime}_L}{a^{4Ml}} +
\frac{C^{\prime}_{\eta}}{a^{4NJ}}. \]
where we may take
$c^{\prime}_L = (L^{2l}\|f_0\|^2_{\infty})/(a^{4l}-1)$, and
$C^{\prime}_{\eta} = M_J^2/[(a^{4J}-1)\eta^{2J}]$.
\end{itemize}
\end{lemma}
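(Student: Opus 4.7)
The approach is to exploit two pointwise bounds on $f(s)=s^l f_0(s)$: first $|f(s)|^2 \le \|f_0\|_\infty^2 \, s^{2l}$, which makes $|f|^2/s$ integrable near $s=0$ precisely because $l\geq 1$; and second $|f(s)|\le M_J s^{-J}$, which is just the definition of $M_J$ and encodes the Schwartz decay of $f$ at infinity. The quantities $g_{\varepsilon,N}-c$ and $h_{M,N}-h$ are each a sum of a small-argument tail and a large-argument tail, and these two bounds handle them respectively, with the extremization over $\lambda\in[\eta,L]$ taken at whichever endpoint makes each tail worst.

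For part (a), I start from the scale-invariant identity $c=\int_0^\infty |f(t\lambda)|^2\,dt/t$ (valid for every $\lambda>0$) and write
\begin{equation}
c - g_{\varepsilon,N}(\lambda) \;=\; \int_0^{\varepsilon} |f(t\lambda)|^2 \frac{dt}{t} \;+\; \int_N^{\infty} |f(t\lambda)|^2 \frac{dt}{t}.
\end{equation}
The substitution $u=t\lambda$ converts these into $\int_0^{\varepsilon\lambda}|f(u)|^2\,du/u$ and $\int_{N\lambda}^{\infty}|f(u)|^2\,du/u$. In the first, insert $|f(u)|^2\le\|f_0\|_\infty^2 u^{2l}$ and integrate to obtain $\|f_0\|_\infty^2(\varepsilon\lambda)^{2l}/(2l)$; using $\lambda\le L$ yields the claimed $c_L\varepsilon^{2l}$. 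In the second, insert $|f(u)|^2\le M_J^2 u^{-2J}$ and integrate to obtain $M_J^2/[2J(N\lambda)^{2J}]$; using $\lambda\ge\eta$ yields the claimed $C_\eta/N^{2J}$.

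Part (b) is the analogous discrete argument. I split
\begin{equation}
h(\lambda) - h_{M,N}(\lambda) \;=\; \sum_{j\le -M-1} |f(a^{2j}\lambda)|^2 \;+\; \sum_{j\ge N+1} |f(a^{2j}\lambda)|^2
\end{equation}
and apply the same two pointwise bounds term by term. Each tail collapses to a geometric series: the lower tail sums $a^{4lj}\lambda^{2l}\|f_0\|_\infty^2$ with common ratio $a^{-4l}$, giving $\lambda^{2l}\|f_0\|_\infty^2\,a^{-4lM}/(a^{4l}-1)$; the upper tail sums $\lambda^{-2J}a^{-4Jj}M_J^2$ with common ratio $a^{-4J}$, giving $\lambda^{-2J}M_J^2\,a^{-4NJ}/(a^{4J}-1)$. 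Extremizing $\lambda$ over $[\eta,L]$ (at $\lambda=L$ for the lower tail, $\lambda=\eta$ for the upper) produces exactly $c'_L/a^{4Ml}$ and $C'_\eta/a^{4NJ}$.

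There is no serious obstacle; the lemma is quantitative bookkeeping of the same ingredients that made Lemma \ref{gelmaylem} work. The points requiring care are (i) the hypothesis $l\geq 1$, which is what makes $u^{2l-1}$ integrable at $0$ in part (a) and what makes the lower-tail geometric series have ratio strictly less than $1$ in part (b); and (ii) the assumption $a>1$, harmless by the symmetry $a\mapsto 1/a$ noted at the end of the previous proof, which fixes which summation index corresponds to the ``small-$\lambda$'' tail and which to the ``large-$\lambda$'' tail.
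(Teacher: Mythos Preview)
Your proof is correct and follows essentially the same approach as the paper: split the difference into a small-argument tail and a large-argument tail, bound each using $|f(s)|^2 \le \|f_0\|_\infty^2 s^{2l}$ and $|f(s)|^2 \le M_J^2 s^{-2J}$ respectively, and extremize over $\lambda\in[\eta,L]$ at the appropriate endpoint. The only cosmetic difference is that you substitute $u=t\lambda$ before applying the pointwise bounds, whereas the paper bounds $|f(st)|^2$ directly; the resulting computations are identical.
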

\begin{proof}
Say $s \in I$.  Then
\[ |g_{\varepsilon,N}(s) - c| = \int_0^{\varepsilon}|f|^2(st)\frac{dt}{t}
+ \int_N^{\infty}|f|^2(st)\frac{dt}{t}. \]
(a) follows from noting $|f|^2(st) \leq (Lt)^{2l}\|f_0\|^2_{\infty}$ in the
first integral, and that $|f|^2(st) \leq M_J^2/(\eta t)^{2J}$ in the second.
Similarly,
\[ |h_{M,N}(s) - h(s)| = \sum_{j<-M} |f|^2(a^{2j}s) + \sum_{j>N} |f|^2(a^{2j}s). \]
(b) follows from noting $|f|^2(a^{2j}s) \leq (La^{2j})^{2l}\|f_0\|^2_{\infty}$
in the first summation, and that $|f|^2(a^{2j}s) \leq M_J^2/(\eta a^{2j})^{2J}$
in the second.  This completes the proof.
\end{proof}

We can now express the strong convergence in Lemma \ref{gelmaylem} in the
following very quantitative manner:

\begin{lemma}
\label{gelmayquant}
Notation as in Lemmas \ref{gelmaylem} and \ref{uncon}, and again let
$T = \int_0^{\infty} \lambda dP_{\lambda}$ be the spectral decomposition of $T$.
Then for any $F \in {\mathcal H}$, we have:
\begin{equation}
\label{quant1}
\left\|\left[g_{\varepsilon,N}(T) - c\right]F\right\| \leq
\left[c_L \varepsilon^{2l} + \frac{C_{\eta}}{N^{2J}}\right]\|F\| +
2c \left\|(I - P_{[\eta,L]})F\right\|,
\end{equation}
and, if $a > 1$,
\begin{equation}
\label{quant2}
\left\|\left[h_{M,N}(T) - h(T)\right]F\right\| \leq \left[\frac{c^{\prime}_L}{a^{4Ml}} +
\frac{C^{\prime}_{\eta}}{a^{4NJ}}\right]\|F\| +
2B_a \left\|(I - P_{[\eta,L]})F\right\|.
\end{equation}
\end{lemma}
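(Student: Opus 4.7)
The plan is to prove both inequalities by splitting $F$ through the spectral projection $P_{[\eta,L]}$ associated with $T$, using Lemma \ref{uncon} on the piece where spectrum is confined to $[\eta,L]$, and using a crude sup-norm bound on the complementary piece.

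First I would write $F = F_1 + F_2$ with $F_1 = P_{[\eta,L]}F$ and $F_2 = (I - P_{[\eta,L]})F$, which are orthogonal. Applying the operator $g_{\varepsilon,N}(T) - cI$ and using the triangle inequality reduces the problem to estimating the two pieces separately. For $F_1$, the spectral theorem identifies $(g_{\varepsilon,N}(T) - cI)F_1$ with multiplication by $(g_{\varepsilon,N} - c)$ on the spectral subspace for $[\eta,L]$, so its norm is bounded by $\|g_{\varepsilon,N} - c\|_{L^\infty[\eta,L]}\|F_1\|$. By Lemma \ref{uncon}(a) this is at most $[c_L \varepsilon^{2l} + C_\eta/N^{2J}]\|F\|$, where I use $\|F_1\| \leq \|F\|$.

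For $F_2$, the spectral theorem together with $0 \leq g_{\varepsilon,N}(\lambda) \leq c$ for every $\lambda \geq 0$ (already observed in the proof of Lemma \ref{gelmaylem}) gives $\|g_{\varepsilon,N}(T)\| \leq c$, so $\|g_{\varepsilon,N}(T) - cI\| \leq 2c$ and therefore $\|(g_{\varepsilon,N}(T) - cI)F_2\| \leq 2c\|(I - P_{[\eta,L]})F\|$. Adding the two bounds yields (\ref{quant1}). The proof of (\ref{quant2}) is structurally identical: split $F$ the same way, apply Lemma \ref{uncon}(b) on $F_1$ to produce the geometric factor $c'_L/a^{4Ml} + C'_\eta/a^{4NJ}$, and use the uniform bounds $\|h_{M,N}\|_\infty, \|h\|_\infty \leq B_a$ (hence $\|h_{M,N}(T) - h(T)\| \leq 2B_a$) on $F_2$.

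There is no real obstacle here; the only conceptual point is the choice of decomposition. Splitting via $P_{[\eta,L]}$ rather than via $I - P$ is essential, because the uniform convergence from Lemma \ref{uncon} holds only on the compact interval $[\eta,L]$ bounded away from both $0$ and $\infty$, while the deficit must be absorbed into the second term $2c\|(I - P_{[\eta,L]})F\|$, for which the two parameters $\eta$ and $L$ are at the user's disposal.
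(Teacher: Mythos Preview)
Your proposal is correct and follows essentially the same argument as the paper: the paper too decomposes $F = P_{[\eta,L]}F + (I-P_{[\eta,L]})F$, applies Lemma~\ref{uncon} via the spectral theorem on the first piece, and uses the uniform bounds $\|g_{\varepsilon,N}\|_\infty \leq c$ (resp.\ $\|h_{M,N}\|_\infty, \|h\|_\infty \leq B_a$) on the second piece to get the factor $2c$ (resp.\ $2B_a$). The only cosmetic difference is that the paper writes the $F_2$ estimate as $\|g_{\varepsilon,N}(T)\|\,\|F_2\| + c\|F_2\|$ rather than bounding $\|g_{\varepsilon,N}(T)-cI\|$ directly, which is the same thing.
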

\begin{proof} For (\ref{quant1}), we need only substitute $F = P_{[\eta,L]}F + (I-P_{[\eta,L]})F$
in the left side.  This gives
\begin{eqnarray*}
\|\left[g_{\varepsilon,N}(T) - c\right]F\| & \leq & \|\left[g_{\varepsilon,N}(T) - c\right]P_{[\eta,L]}F\|
+ \|g_{\varepsilon,N}(T)\|\:\|(I - P_{[\eta,L]}) F\|
+ c\|(I - P_{[\eta,L]}) F\| \\
& \leq  & \|[g_{\varepsilon,N}(T) - c]\chi_{[\eta,L]}\|_{\infty}\|F\|
+ 2c\|(I - P_{[\eta,L]}) F\|,
\end{eqnarray*}
which, because of Lemma \ref{uncon}, establishes (\ref{quant1}).  The proof of (\ref{quant2})
is entirely analogous.
 \end{proof}

(\ref{quant1}) and (\ref{quant2}) are of significance for numerical calculations.  Say,
for instance, that $F = (I-P)F$, and
one wants to compute $h(T)F$ to a certain precision.  This involves summing
an infinite series, so one instead seeks to compute $[h_{M,N}(T)]F$ for $M, N$ large enough;
how large must one take them to be?  One first chooses $\eta,L$ so that the
second term on the right side of (\ref{quant2}) is
very small.  Then one chooses $M, N$ to make the first term on the right side of
(\ref{quant2}) very small as well.

We will return to this point in our discussion of space-frequency analysis, in our
sequel article \cite{gmfr}.

\section{Preliminaries on Manifolds}\label{preliminaries-on-manifolds}

For the rest of the article, $({\bf M},g)$ will denote a smooth, compact,
connected, oriented Riemannian manifold of dimension $n$,
and $\mu$ will denote the measure on ${\bf M}$ arising from integration with respect to the
volume form on ${\bf M}$.  In this section we assemble
several well-known facts about analysis on ${\bf M}$ (preceded, below, by bullets),
which we shall need in this article and in sequel articles.
\ \\
For $x,y \in {\bf M}$, we let $d(x,y)$ denote the infimum of the lengths of
all piecewise $C^1$ curves joining $x$ to $y$; then $({\bf M},d)$ is evidently
a metric space.  It is well-known (see, e.g., \cite{Milnor}) that there is a geodesic
joining $x$ to $y$ with length $d(x,y)$, but this fact, though basic, is not
so relevant for this article.  Most of what we need to know about the metric
$d$ is contained in the simple proposition which follows.

For $x \in {\bf M}$, we let $B(x,r)$ denote the ball $\{y: d(x,y) < r\}$.
\begin{proposition}
\label{ujvj}
Cover ${\bf M}$ with a finite collection of open sets $U_i$  $(1 \leq i \leq I)$,
such that the following properties hold for each $i$:
\begin{itemize}
\item[$(i)$] there exists a chart $(V_i,\phi_i)$ with $\overline{U}_i
\subseteq V_i$; and
\item[$(ii)$] $\phi_i(U_i)$ is a ball in $\RR^n$.
\end{itemize}
Choose $\delta > 0$ so that $3\delta$ is a Lebesgue number for the covering
$\{U_i\}$.  Then, there exist $c_1, c_2 > 0$ as follows:\\
For any $x \in {\bf M}$, choose any $U_i \supseteq B(x,3\delta)$.  Then, in
the coordinate system on $U_i$ obtained from $\phi_i$,
\begin{equation}\notag
%\label{rhoeuccmp2}
d(y,z) \leq c_2|y-z|
\end{equation}
for all $y,z \in U_i$; and
\begin{equation}\notag
%\label{rhoeuccmp}
c_1|y-z| \leq d(y,z)
\end{equation}
for all $y,z \in B(x,\delta)$.
\end{proposition}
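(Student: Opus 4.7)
The plan is to reduce the Riemannian distance $d$ in each chart to the Euclidean distance, by combining the compactness of $\overline{U}_i$ with the Lebesgue-number condition. For each $i$, I would pull back the Riemannian metric through $\phi_i$ to obtain a smooth, positive-definite matrix field $g^{(i)}$ on $\phi_i(V_i) \subseteq \RR^n$. Since $\overline{U}_i \subseteq V_i$ is compact, there exist $\alpha_i,\beta_i > 0$ with $\alpha_i |v|^2 \le g^{(i)}(p)(v,v) \le \beta_i |v|^2$ for every $p \in \overline{\phi_i(U_i)}$ and every $v \in \RR^n$. Setting $c_1 = \min_i \sqrt{\alpha_i}$ and $c_2 = \max_i \sqrt{\beta_i}$, the claim is that these constants work.

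For the upper estimate $d(y,z) \le c_2 |y-z|$ when $y,z \in U_i$: since $\phi_i(U_i)$ is a Euclidean ball, hence convex, the straight-line segment from $\phi_i(y)$ to $\phi_i(z)$ lies entirely in $\phi_i(U_i)$. Its pullback under $\phi_i^{-1}$ is a $C^1$ curve from $y$ to $z$ in $U_i$ whose Riemannian length, by the upper eigenvalue bound, is at most $\sqrt{\beta_i}\,|y-z| \le c_2|y-z|$. The estimate then follows from the definition of $d(y,z)$ as an infimum.

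For the lower estimate $c_1|y-z| \le d(y,z)$ when $y,z \in B(x,\delta)$: the obstacle is that, a priori, a near-minimizing curve from $y$ to $z$ could exit $U_i$, in which case the coordinate argument is not directly available. This is precisely where the factor of $3$ in the Lebesgue-number condition is needed. The triangle inequality gives $d(y,z) \le d(x,y)+d(x,z) < 2\delta$, so for every sufficiently small $\varepsilon > 0$ there is a piecewise $C^1$ curve $\gamma$ from $y$ to $z$ of length less than $2\delta$. If $\gamma$ passed through any point $p \notin B(x,3\delta)$, the portion of $\gamma$ from $y$ to $p$ would have length at least $d(y,p) \ge d(x,p) - d(x,y) > 2\delta$, a contradiction. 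Hence $\gamma$ is confined to $B(x,3\delta) \subseteq U_i$; pulling back to coordinates and invoking the lower eigenvalue bound gives
\[ \text{length}(\gamma) \;\ge\; \sqrt{\alpha_i}\,|\phi_i(y)-\phi_i(z)| \;\ge\; c_1\,|y-z|. \]
Taking the infimum over such $\gamma$ yields $d(y,z) \ge c_1|y-z|$.

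The main obstacle in the whole argument is exactly this trapping step; once one knows every near-minimizing curve stays inside $U_i$, the rest is routine compactness and eigenvalue bookkeeping.
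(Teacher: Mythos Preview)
Your proof is correct and follows essentially the same approach as the paper: the straight segment in the convex coordinate ball gives the upper bound, and near-minimizing curves are trapped in $B(x,3\delta)\subseteq U_i$ via the triangle inequality to obtain the lower bound. You are slightly more explicit than the paper in defining $c_1,c_2$ through uniform eigenvalue bounds on the pulled-back metrics over the finitely many $\overline{U}_i$, but the structure of the argument is identical.
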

\begin{proof} Say $y,z \in U_i$.
We work in
the coordinate system on $U_i$ obtained from $\phi_i$.  Then
$d(y,z)$ is at most the length (in the Riemannian metric)
of the straight line joining
$y$ to $z$, which is $\leq c_2|y-z|$ for some $c_2$.  (By assumption (ii),
that straight line is contained in $U_i$.)
On the other hand, if $y,z \in B(x,\delta) \subseteq U_i$,
we may take a sequence of piecewise $C^1$ curves $\gamma_k$, joining $y$ to $z$, whose lengths
$l(\gamma_k)$ approach $d(y,z)$ as $k \rightarrow \infty$.  Surely $d(y,z) < 2\delta$.
Thus, for some $N > 0$, if $k > N$, then
$l(\gamma_k) < 2\delta$.  Therefore each point on $\gamma_k$ is at distance
at most $2\delta$ from $y$, hence at most $3\delta$ from $x$.
  Thus $\gamma_k \subseteq U_i$.  Letting $\|\gamma_k^{\prime}(t)\|$
denote the length of the tangent
vector $\gamma_k^{\prime}(t)$ in the Riemannian metric, we see that
\[ l(\gamma_k) =
\int_0^1 \|\gamma_k^{\prime}(t)\|dt \geq
c_1\int_0^1 |\gamma_k^{\prime}(t)| dt \geq c_1|z-y|,\]
since $z-y = \int_0^1 \gamma_k^{\prime}(t) dt$.  Letting $k \rightarrow \infty$, we see that
$d(y,z) \geq c_1|y-z|$ as well.
This completes the proof.
\end{proof}

We fix collections $\{U_i\}$, $\{V_i\}$, $\{\phi_i\}$ and also $\delta$ as in
Proposition \ref{ujvj}, once and for all.
\begin{itemize}
\item {\em Notation as in Proposition \ref{ujvj},
there exist $c_3, c_4 > 0$, such that
\begin{equation}
\label{ballsn}
c_3r^n \leq \mu(B(x,r)) \leq c_4r^n
\end{equation}
whenever $x \in {\bf M}$ and $0 < r \leq \delta$, and such that
\begin{equation}
\label{ballsn1}
c_3 \delta^n \leq \mu(B(x,r)) \leq \mu({\bf M}) \leq c_4r^n
\end{equation}
whenever $x \in {\bf M}$ and $r > \delta$.}\\
\ \\
To see (\ref{ballsn}), note that, since the collection
$\{U_i\}$ is finite, we may fix $i$ and prove it for all $x$ with
$B(x,3\delta) \subseteq U_i$.

We work
in the coordinate system on $U_i$ obtained from $\phi_i$;
in that coordinate system, $U_i$ is a Euclidean ball, say $\{y: |y-x_0| < R\}$.
(See Proposition \ref{ujvj}).  By compactness and a simple contradiction
argument, there is an $\eta > 0$ such that, for all $x$ with
$B(x,3\delta) \subseteq U_i$, one has that $|x-x_0| < R - \eta$.
Accordingly, for such an $x$, if $|y-x| < \eta$, then $y \in U_i$.

Thus, by Proposition \ref{ujvj}, we have that
\[ \{y: |y-x|< \min(r/c_2,\eta)\} \subseteq B(x,r) \subseteq \{y: |y-x|< r/c_1\}, \]
for all $r < \delta$.
(\ref{ballsn}) now follows from the fact that
the determinant of the metric tensor $g$ is bounded above and below on $U_i$.
For (\ref{ballsn1}), one need only note that if $r > \delta$, then
$\mu(B(x,r)) \geq \mu(B(x,\delta)) \geq c_3\delta^n$, while
$\mu(B(x,r)) \leq \mu({\bf M}) \leq [\mu({\bf M})/\delta^n]\delta^n \leq
[\mu({\bf M})/\delta^n]r^n$.
\item {\em
  $({\bf M},d,\mu)$ is a space of homogeneous type, in the sense of
\cite{CoifWei71}.}\\
\ \\
Indeed, $d$ is a metric and $\mu$ is a positive Borel
measure, so one only needs to check the doubling condition: $\mu(B(x,2r)) \leq
C\mu(B(x,r))$ with $C$ independent of $x,r$.  But this is immediate from
(\ref{ballsn}) and (\ref{ballsn1}).
\item {\em
  For any $N > n$ there exists $C_N$ such that
\begin{equation}
\label{ptestm}
\int_{\bf M} [1 + d(x,y)/t]^{-N} d\mu(y) \leq C_N t^n
\end{equation}
for all $x \in {\bf M}$ and $t > 0$.}\\
\ \\
(\ref{ptestm}) is proved by the ``dyadic annulus'' method.  Fix $x, t$
and let $A_j = B(x,2^jt) \setminus B(x,2^{j-1}t)$, so that, by (\ref{ballsn}) and (\ref{ballsn1}),
$\mu(A_j) \leq c_42^{nj}t^n$. (\ref{ptestm}) now follows at once,
if one breaks up the integral in (\ref{ptestm}) into integrals over $B(x,t), A_1, A_2, \ldots$,
and notes that $\sum_{j=0}^{\infty} 2^{(n-N)j} < \infty$.
\item {\em
  For any $N > n$ there exists $C_N^{\prime}$ such that
\begin{equation}
\label{ptestm1}
\int_{d(x,y) \geq t} d(x,y)^{-N} d\mu(y) \leq C_N^{\prime} t^{n-N}
\end{equation}
for all $x \in {\bf M}$ and $t > 0$.}\\
\ \\
(\ref{ptestm1}) follows at once from (\ref{ptestm}), once we observe that,
if $d(x,y) \geq t$, then\\
$[d(x,y)/t]^{-N} \leq C[1+d(x,y)/t]^{-N}$, if $C = 2^N$.
\item{\em
  For any $N > n$ there exists $C_N^{\prime \prime}$ such that
\begin{equation}
\label{ptestm2}
\int_{\bf M} [1 + d(x,z)/t]^{-N} [1 + d(z,y)/t]^{-N}d\mu(z) \leq
C_N^{\prime \prime} t^n[1 + d(x,y)/t]^{-N}
\end{equation}
for all $x,y \in {\bf M}$ and $t > 0$.}\\
\ \\
To see this, break up the integral into integrals over $H_1 = \{z: d(x,z) \leq d(y,z)\}$
and $H_2 = \{z: d(x,z) > d(y,z)\}$ (which, by the way, are hemispheres if ${\bf M}$ is a
round sphere and $x \neq y$).  By symmetry we need only estimate the integral
over $H_1$. But if $z$ is
in $H_1$, $d(x,y) \leq 2d(z,y)$, so $[1 + d(z,y)/t]^{-N}
 \leq C[1 + d(x,y)/t]^{-N}$ (where $C = 2^N$).  Thus the integral over $H_1$ is no greater
than
$C[1 + d(x,y)/t]^{-N}\int_{H_1} [1 + d(x,z)/t]^{-N} d\mu(z))$.  Estimating
the latter integral through (\ref{ptestm}), we obtain (\ref{ptestm2}).
\item{\em
  For all $M, t > 0$, and for all
$E \subseteq {\bf M}$ with diameter less than $Mt$, if
$x_0 \in E$, then one has that
\begin{equation}
\label{alcmpN}
\frac{1}{M+1}[1+d(x,y)/t] \leq [1+d(x_0,y)/t] \leq (M+1)[1+d(x,y)/t]
\end{equation}
for all $x \in E$ and all $y \in {\bf M}$.}\\
\ \\
This is true simply because $d$ is a metric.
\end{itemize}
\section{Kernels}\label{kernels}
$\Delta$ will now denote the Laplace-Beltrami operator on ${\bf M}$ (equal to
$-d^*d$, where $\ ^*$ is taken with respect to the given Riemannian metric).
We apply Lemma \ref{gelmaylem} to $T=\Delta$.

In order to carry out the plan explained in the introduction to this article,
we must study the kernel $K_{\sqrt t}(x,y)$ of the operator $f(t\Delta)$ for
$f \in {\mathcal S}(\RR^+)$, and we do so in this section.  Before proving
the crucial Lemma \ref{manmol}, we will present some well-known information about
$K_{\sqrt t}(x,y)$ for large $t$ and also, off the diagonal, for small $t$.  (This information will be
preceded, below, by $\rhd$ signs.)\\
Concerning the Laplace-Beltrami operator $\Delta$, we first recall:
\begin{itemize}
\item   $\Delta$, as an operator on $C^{\infty}({\bf M})$,
has an orthonormal basis of eigenfunctions $\{u_l: 0 \leq l < \infty\}$;
all are in $C^{\infty}({\bf M})$.  We may, and do, choose the $u_l$ to
be real-valued.  We order the $u_l$ so that the corresponding
eigenvalues $\lambda_l$ form a non-decreasing sequence.  Then $u_0$ is constant,
$\lambda_0 = 0$ and all other $\lambda_l > 0$.
\vspace{.3cm}

This easily implies:
\item
 $\Delta$, as an operator on $C^{\infty}({\bf M})$, has a unique
extension to a self-adjoint operator on $L^2({\bf M})$ (which we also
denote by $\Delta$).  Its domain is $\{F= \sum a_l u_l \in L^2({\bf M}):
\sum |\lambda_l a_l|^2 < \infty\}$, and for such an $F$, $\Delta F =
\sum \lambda_l a_l u_l$.
\vspace{.3cm}

 Of great importance is {\em Weyl's Lemma}, which says \cite{Hor68}, in sharp form:
\item For $\lambda > 0$, let $N(\lambda)$ denote the number of eigenvalues of
$\Delta$ which are less than or equal to $\lambda$ (counted with respect to
multiplicity).  Then for some $c > 0$,
$N(\lambda) = c\lambda^{n/2} + O(\lambda^{(n-1)/2})$.
\vspace{.3cm}

Since $N(\lambda_l) = l+1$, we conclude:
\item
  For some constants $c_1, c_2 > 0$, we have
$c_1 l^{2/n} \leq \lambda_l \leq c_2 l^{2/n}$ for all $l$.
\vspace{.3cm}

Since $\Delta^m u_l = \lambda_l^m u_l$, and $\Delta^m$ is an elliptic differential
operator of degree $2m$, Sobolev's lemma, combined with the last fact, implies:
\item For any integer $k \geq 0$, there exists $C_k, \nu_k > 0$ such that
$\|u_l\|_{C^k({\bf M})} \leq C_k (l+1)^{\nu_k}$.
 \vspace{.2cm}

(In fact, by Sobolev's lemma, we may, for any $\varepsilon > 0$, take
$\nu_k = (2k+n+\varepsilon)/2n$.  By \cite{Sogge93}, Lemma 4.2.4, with
$\lambda = \lambda_l$ in that lemma, we may in fact take $\nu_0 = (n-1)/n$.)
\vspace{.3cm}

 From these  facts one sees at once:
\item The mapping $\sum a_l u_l \rightarrow (a_l)_{l \geq 0}$ gives a Fr\'echet space
isomophism of $C^{\infty}({\bf M})$ with the space of rapidly decaying
sequences.
\end{itemize}
For the rest of this section, say $f \in {\mathcal S}(\RR^+)$.  We conclude:
\begin{itemize}
\item[
$\rhd$] For $t > 0$, $x,y \in {\bf M}$, let (for the rest of this section)
\begin{equation}
\label{kerexp}
K_{{\sqrt t}}(x,y) = \sum_{l=0}^{\infty} f(t\lambda_l)u_l(x)u_l(y).
\end{equation}
Then $K_{\sqrt t}$ is the kernel of the operator $f(t\Delta)$, in the
sense that if $F \in L^2({\bf M})$, then
\begin{equation}\notag
%\label{kerdf}
[f(t\Delta) F](x) = \int_{\bf M} K_{\sqrt t}(x,y) F(y) d\mu(y),
\end{equation}
and $K_{\sqrt t}(x,y)$ is smooth in $(t,x,y)$ (for $t > 0$, $x, y \in {\bf M}$).\\
\ \\
From (\ref{kerexp}), our estimates on the $\lambda_l$ and the $\|u_l\|_{C^k}$,
and the rapid decay of Schwartz functions, we conclude:
\item[
$\rhd$] If $f(0) = 0$, then for any $M, N \geq 0$,
\[ \lim_{t \rightarrow \infty} t^M \frac{\partial^N}{\partial t^N} K_{\sqrt t} = 0 \]
in $C^{\infty}({\bf M} \times {\bf M})$.
\vspace{.3cm}

Note also, that whenever $l > 0$, $\int_{\bf M} u_l d\mu
= C \int_{\bf M} u_l \overline{u_0} d\mu  = 0$.  Accordingly:\\
\item[
$\rhd$]  If $f(0) = 0$, then $\int_{\bf M} K_{\sqrt t} (x,y) d\mu(x) = 0$
for all $y \in {\bf M}$, and $\int_{\bf M} K_{\sqrt t} (x,y) d\mu(y) = 0$
for all $x \in {\bf M}$.
\end{itemize}

Next we discuss the behavior of $K_{\sqrt t}$, as $t \rightarrow 0^+$.
For this, we utilize some of the theory of pseudodifferential
operators.

We will need some facts about symbols on $\RR$.
For $m \in \RR$, let $S^m_1(\RR)$ denote the space of standard
symbols $p(\xi)$ of order $m$, which depend only on the ``dual variable" $\xi$.
Let $\{\|\:\|_{m,N}\}$ denote the
natural nondecreasing family of seminorms defining the Fr\'echet space topology
of $S^{m}_{1}(\RR)$; thus
\[ \|p\|_{m,N} = \sum_{0 \leq j \leq N}\sup_{\xi} \left[(1+|\xi|)^{j-m}|p^{(j)}(\xi)|\right] \]
for $p \in S^{m}_{1}(\RR)$.
If $G \in S^m_1(\RR)$, let $G_t(\xi) = G(t\xi)$.
It is evident that $G_t \in S^{m}_{1}(\RR)$.
In fact we have:
\begin{itemize}
\item {\em Say that $G \in S^m_1(\RR)$.  Let $k$ be the least integer
which is greater than or equal to $m$; if $k > 0$,
suppose further that $G$ vanishes to order at least $k$ at $0$.
Then for any $N$, there exists $C > 0$ such that
\begin{equation}
\label{tmdep}
\|G_t\|_{m,N} \leq Ct^m
\end{equation}
whenever $0 < t < 1$}.
\end{itemize}
To see this, say $j \geq 0$ is an integer.
One needs only note:
\begin{itemize}
\item[$(i)$] if $0 \leq j < m$ (so that $k > 0$) and $t|\xi| \leq 1$, then
\[ |(G_t)^{(j)}(\xi)| \leq Ct^j (t^{k-j}|\xi|^{k-j})
\leq Ct^j (t^{m-j}|\xi|^{m-j}) \leq C t^m (1 + |\xi|)^{m-j};\]
\item[$(ii)$] if $0 \leq j < m$ and $t|\xi| > 1$, then
\[ |(G_t)^{(j)}(\xi)| \leq Ct^j (1 + t|\xi|)^{m-j}
\leq Ct^j (t^{m-j}|\xi|^{m-j}) \leq C t^m (1 + |\xi|)^{m-j};\]
\item[$(iii)$] if $j \geq m$, then
\[|(G_t)^{(j)}(\xi)| \leq Ct^j (1+|t\xi|)^{-(j-m)} \newline \leq Ct^m(1+|\xi|)^{-(j-m)}.\]
\end{itemize}
This proves the claim.\\
\ \\
In particular, say $G \in S^1_1(\RR)$ is arbitrary.
Then $G-G(0) \in S^{1}_{1}(\RR)$, and vanishes to order at least $1$ at $0$.
Accordingly, for any $N$,
$\|G_t-G(0)\|_{1,N} \leq Ct$ for $0 < t < 1$, so that
$G_t \rightarrow G(0)$ in $S^1_{1}(\RR)$ as $t \rightarrow 0^+$.\\
 \\
We return now to ${\bf M}$, where we need to look at the class $OPS^m_{1,0}({\bf M})$
of pseudodifferential operators of order $m \in [-\infty,\infty)$.
As is familiar, $T: C^{\infty}({\bf M}) \to C^{\infty}({\bf M})$ is in $OPS^m_{1,0}({\bf M})$ provided
that the following conditions hold for $\varphi, \psi \in C^{\infty}({\bf M})$:
\begin{enumerate}
\item  If $\supp \varphi \cap \supp \psi = \oslash$, then the operator $\varphi T \psi$ has a smooth kernel; and
\item If $\supp \varphi \cup \supp \psi$ is contained in a chart $(V,\Phi)$, then $\varphi T \psi$
is the pullback to ${\bf M}$ of a pseudodifferential operator $\Phi_*(\varphi T \psi)
\in OPS^m_{1,0}(\RR^n)$.\\
\end{enumerate}
(Of course, here, $(\varphi T \psi)F = \varphi T(\psi F)$.)  One places a Fr\'echet space
structure on $OPS^m_{1,0}({\bf M})$ in a natural manner.  (A brief sketch: First note that
$OPS^{-\infty}({\bf M})$ is the space of operators with smooth kernels, so it has
a natural Fr\'echet space structure, inherited from $C^{\infty}({\bf M} \times {\bf M})$.
For other $m$, one chooses a finite atlas $\{W_k\}$ on ${\bf M}$ with the property
that if two charts in the atlas intersect, their union is contained in a chart.  One chooses
a partition of unity $\{\varphi_k\}$ subordinate to this atlas.  One notes that
if $T \in OPS^m_{1,0}(\RR^n)$, then $T = \sum_{i,j} \varphi_i T \varphi_j$.  One notes that
if $W_i \cap W_j = \oslash$, then $\varphi_i T \varphi_j \in OPS^{-\infty}({\bf M})$,
a Fr\'echet space; while if $W_i \cap W_j \not= \oslash$, then $W_i \cap W_j
\subseteq V$ for some chart $(V,\Phi)$, and $\Phi_*(\varphi T \psi) \in OPS^m_{1,0}(\RR^n)$, also
a Fr\'echet space.  Finally one defines seminorms on
$OPS^m_{1,0}(\RR^n)$ of the form $\sum_{i,j}\|\varphi_i T \varphi_j\|$, where, in the summation,
one uses appropriate seminorms coming from $OPS^{-\infty}({\bf M})$ if $W_i \cap W_j = \oslash$,
or from $OPS^m_{1,0}(\RR^n)$ if $W_i \cap W_j \not= \oslash$.  The Fr\'echet space topology
thereby placed on $OPS^m_{1,0}({\bf M})$ is independent of all choices made.)\\

One has the following theorem of Strichartz (\cite{Stric72}, or Theorem 1.3, page 296, of \cite{Tay81}):
\begin{itemize}
\item {\em If $p(\xi) \in S^{m}_{1}(\RR)$, then $p(\sqrt{\Delta}) \in OPS^m_{1,0}({\bf M})$.}
\end{itemize}
In fact, the map $p \rightarrow p(\sqrt{\Delta})$ is continuous from $S^{m}_{1}(\RR)$
to $OPS^m_{1,0}({\bf M})$.  Indeed, by the closed graph theorem for Fr\'echet
spaces, it is enough to observe that if $u \in C^{\infty}({\bf M})$, then
the maps $p \rightarrow  \langle  p(\sqrt{\Delta})u,u_l  \rangle $ are continuous from
$S^{m}_{1}(\RR)$ to $\CC$ for every $l$, and this is clear.\\
\ \\
As a consequence, if $\varphi, \psi \in C^{\infty}({\bf M})$ are as in \#1 above,
then the map from $S^{m}_{1}(\RR)$ to $OPS^{-\infty}({\bf M})$,
which takes $p$ to $\varphi p(\sqrt{\Delta}) \psi$, is continuous.
If $\varphi, \psi \in C^{\infty}({\bf M})$ are as in \#2 above, then the map from
$S^{m}_{1}(\RR)$ to $OPS^m_{1,0}(\RR^n)$, which takes $p$ to $\Phi_*(\varphi p(\sqrt{\Delta}) \psi)$
is continuous.\\
\ \\
As usual, $f \in {\mathcal S}(\RR^+)$; let
$G(\xi) = f(\xi^2)$.   Then $G \in {\mathcal S}(\RR)$.  (In fact, if we allow $f$
to vary, the map $f \rightarrow G$ is evidently a bijection between ${\mathcal S}(\RR^+)$
and the space of even Schwartz functions on $\RR$.)  Now
$G_{\sqrt t}(\xi) = f(t \xi^2)$,
and $G_{\sqrt t}(\sqrt{\Delta}) = f(t \Delta)$.
Since $G_{\sqrt t} \rightarrow G(0)$ in $S^1_{1}(\RR)$ as $t \rightarrow 0^+$,
we infer:
\begin{itemize}
\item {\em $f(t\Delta) \rightarrow f(0)I$ in
$OPS^1_{1,0}({\bf M})$ as $t \rightarrow 0^+$. }
\end{itemize}
Let $D$ denote the diagonal of ${\bf M} \times {\bf M}$.  We can now show:

\begin{itemize}
\item[$\rhd$] {\em For any $N > 0$,
\[ \lim_{t \rightarrow 0}  \frac{\partial^N}{\partial t^N} K_{\sqrt t} = 0 \]
in $C^{\infty}(({\bf M} \times {\bf M}) \setminus D)$.}
\end{itemize}
To prove this, we adapt the arguments of \cite{Tay81}, page 313.
Say $\varphi, \psi \in C^{\infty}({\bf M})$, have disjoint supports.
Suppose further that $\varphi \equiv 1$ in an open set $U$ and $\psi \equiv 1$ in an open
set $V$.

It is enough to show that, for any $C^{\infty}$ differential operator $Y$ on ${\cal M}$,
acting in the $y$ variable,
\[ \lim_{t \rightarrow 0}  Y \frac{\partial^N}{\partial t^N} K_{\sqrt t}(x,y)
\mbox{ (regarded as a function of } x) = 0 \]
in $C^{\infty}(U)$, uniformly for $y \in V$.

But if $x \in U$ and $y \in V$, then
\[ Y \frac{\partial^N}{\partial t^N} K_{\sqrt t}(x,y)
= \varphi(x)\left[Yf^{(N)}(t\Delta)(\psi \Delta^N \delta_y)\right](x) = \left[S_t(w_y)\right](x), \mbox{ say }, \]
where $S_t$ is the pseudodifferential operator $\varphi Yf^{(N)}(t\Delta)\psi$,
and $w_y = \Delta^N \delta_y$.

For some $s > 0$, the set $\{w_y: y \in V\}$ is a bounded subset of
$H^{-s}({\bf M})$.
Also, as $t \rightarrow 0^+$, $f^{(N)}(t\Delta) \rightarrow f^{(N)}(0)I$ in
$OPS^1_{1,0}({\bf M})$, so $Y f^{(N)}(t\Delta) \rightarrow f^{(N)}(0)Y$ in
$OPS^{k+1}_{1,0}({\bf M})$, if $k = \deg Y$.
But $\varphi, \psi$ have disjoint supports, so
the map $R \rightarrow \varphi R \psi$ is continuous from $OPS^{k+1}_{1,0}({\bf M})$
to $OPS^{-\infty}_{1,0}({\bf M})$.  Therefore
$S_t \rightarrow \varphi \left[f^{(N)}(0) Y\right] \psi \equiv 0$
in $OPS^{-\infty}_{1,0}({\bf M})$.  Thus:

\[ S_t w_y \rightarrow 0 \mbox{ in }
C^{\infty}({\bf M}), \mbox{ uniformly for }y \in V,  \]

as desired.\\
\ \\
Applying the mean value theorem in the $t$ variable repeatedly to the last fact about
$K_{\sqrt t}$, we see:
\begin{itemize}
\item[
$\rhd$]  Let $E$ be any fixed
compact subset of $({\bf M} \times {\bf M}) \setminus D$, and let ${\mathcal U}$
be the interior of $E$.  Then for any $k, N$ there exists $C_{k,N}$ such that
\[ \|K_{\sqrt t}\|_{C^k({\mathcal U})} \leq C_{k,N} t^N \]
whenever $0 < t < 1$.
\end{itemize}

So far, nearly everything in this section has been well-known, but now we must consider
the behavior of $K_t$ near the diagonal for small $t$.  As we have explained and motivated in the
introduction, this behavior is described by (\ref{xykt}):
\begin{lemma}
\label{manmol}
Say $f(0) = 0$.  Then
for every pair of
$C^{\infty}$ differential operators $X$ $($in $x)$ and $Y$  $($in $y)$ on ${\bf M}$,
and for every integer $N \geq 0$, there exists $C_{N,X,Y}$ as follows.  Suppose
$\deg X = j$ and $\deg Y = k$.  Then
\begin{equation}
\label{diagest}
t^{n+j+k} \left|\left(\frac{d(x,y)}{t}\right)^N XYK_t(x,y)\right| \leq C_{N,X,Y}
\end{equation}
for all $t > 0$ and all $x,y \in {\bf M}$.
\end{lemma}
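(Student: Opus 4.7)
The plan is to divide the estimate (\ref{diagest}) into three regimes according to the sizes of $t$ and $d(x,y)$, disposing of the two easy regimes by elementary means and concentrating the work on the near-diagonal small-$t$ regime. For $t \geq 1$, the eigenfunction expansion $K_t(x,y) = \sum_l f(t^2 \lambda_l) u_l(x) u_l(y)$, combined with $\lambda_l \sim l^{2/n}$, $\|u_l\|_{C^k} \leq C_k(1+l)^{\nu_k}$, and the rapid decay of $f \in \mathcal{S}(\mathbb{R}^+)$, yields $|XYK_t(x,y)| \leq C_M t^{-M}$ for any $M$; since $\mathbf{M}$ has finite diameter, (\ref{diagest}) follows at once. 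For $t < 1$ and $d(x,y) \geq \delta$ (with $\delta$ as in Proposition \ref{ujvj}), the bound $\|K_t\|_{C^k(E)} \leq C_{k,N} t^N$ established earlier for any compact $E \subset (\mathbf{M} \times \mathbf{M}) \setminus D$, applied to the compact off-diagonal set $\{d(x,y) \geq \delta\}$, again yields (\ref{diagest}) immediately.

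The main case is $t < 1$, $d(x,y) < \delta$. Here both points lie in a common chart $U_i$ (as $3\delta$ is a Lebesgue number for $\{U_i\}$), and after localizing by smooth cutoffs identically $1$ on the region of interest we may transfer the analysis to $\mathbb{R}^n$ via $\phi_i$. The key input is Strichartz's theorem applied to $G_t$, where $G(\xi) = f(\xi^2)$ is an even Schwartz function vanishing to order at least $2$ at $0$ (since $G(0) = f(0) = 0$ and $G'(0) = 0$ by evenness): it gives $f(t^2 \Delta) = G_t(\sqrt{\Delta}) \in OPS^m_{1,0}(\mathbf{M})$ for every $m$, with $p \mapsto p(\sqrt{\Delta})$ continuous between the corresponding Fr\'echet spaces. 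Combining this continuity with the scaling estimate (\ref{tmdep}) ($\|G_t\|_{m,N} \leq C t^m$ for all $m \leq 2$, uniformly in $t \in (0,1]$) yields full-symbol bounds $|\partial_x^\alpha \partial_\xi^\beta a_t(x,\xi)| \leq C_{\alpha,\beta,m} t^m (1+|\xi|)^{m-|\beta|}$ in local coordinates.

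Rescaling $\eta = t\xi$ and writing $\tilde a_t(x,\eta) = a_t(x,\eta/t)$, these translate into bounds of the form $|\partial_x^\alpha \partial_\eta^\beta \tilde a_t(x,\eta)| \leq C (t + |\eta|)^{m-|\beta|}$, which for $|\eta| \gtrsim t$ give Schwartz decay of $\tilde a_t(x,\cdot)$ in $\eta$ uniformly in $x$ and $t$ (by choosing $m$ as a suitable function of $|\beta|$ and the decay index). The kernel then takes the self-similar form
\begin{equation*}
K_t^{\mathrm{loc}}(u,v) = \int e^{i(u-v)\cdot \xi} a_t(u,\xi)\,d\xi = t^{-n}\,\phi_t\!\left(u,\frac{u-v}{t}\right),
\end{equation*}
where $\phi_t(u,z) = \int e^{iz\cdot\eta} \tilde a_t(u,\eta)\,d\eta$ is Schwartz in $z$ uniformly in $u$ and $t$. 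Applying $X$ and $Y$ in local coordinates (where they become differential operators of degrees $j$ and $k$ with smooth bounded coefficients), differentiation of the rescaled argument $(u-v)/t$ produces the expected factors $t^{-1}$; the Schwartz decay of $\phi_t$ in its second argument, combined with the local comparability $c_1|u-v| \leq d(x,y) \leq c_2|u-v|$ from Proposition \ref{ujvj}, yields the required decay in $d(x,y)/t$. Altogether this gives $|XY K_t(x,y)| \leq C_N t^{-n-j-k}(1+d(x,y)/t)^{-N}$, which is equivalent to (\ref{diagest}).

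The principal technical obstacle will be justifying the uniform Schwartz regularity of $\tilde a_t$ in $\eta$ down to $\eta = 0$: the bounds derived naively from Strichartz and (\ref{tmdep}) degenerate near the origin for $\eta$-derivatives of order exceeding the vanishing order of $G$ at $0$. This is where Huygens' principle enters. Splitting $G = G_1 + G_2$ via a smooth even cutoff of $\hat G$ near the origin ($\hat G_1 = \chi \hat G$ compactly supported, $\hat G_2 = (1-\chi)\hat G$ vanishing near $0$), finite propagation speed for $\cos(s\sqrt{\Delta})$ shows that the kernel of $G_{1,t}(\sqrt{\Delta})$ is supported in $\{d(x,y) \leq ct\}$, so for that piece only the trivial $L^\infty$-bound $|XY K_t^{(1)}| \leq C t^{-n-j-k}$ is needed (the $(1+d/t)^{-N}$ factor being automatic there); the complementary piece $G_{2,t}(\sqrt{\Delta})$, with $\hat G_2$ vanishing near $0$, admits the rescaled-symbol analysis without the low-frequency obstruction. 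In this way Strichartz's pseudodifferential calculus and Huygens' principle combine to yield the full estimate.
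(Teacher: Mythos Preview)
Your overall architecture matches the paper's: reduce to $0<t<1$, $d(x,y)<\delta$, localize, invoke Strichartz and (\ref{tmdep}) for the symbol bounds, and use finite propagation speed to dispose of the piece whose Fourier transform is compactly supported. The rescaling $\eta=t\xi$ and the representation $K_t^{\rm loc}(u,v)=t^{-n}\phi_t(u,(u-v)/t)$ are a clean repackaging of what the paper does by splitting the $\xi$-integral at $|\xi|=1/t$ and integrating by parts in $\xi$.

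However, the decomposition in your final paragraph does not work. You set $\hat G_1=\chi\hat G$, $\hat G_2=(1-\chi)\hat G$, so that $\hat G_2$ vanishes near $0$. But the ``low-frequency obstruction'' you correctly identified concerns the behaviour of $G_2$ at $\xi=0$ (equivalently, of $\tilde a_t(x,\eta)$ at $\eta=0$), since the symbol of $G_{2,t}(\sqrt\Delta)$ has principal part $G_2(t|\xi|)$. The condition that $\hat G_2$ vanish near $0$ says nothing about $G_2(0)$, $G_2''(0)$, etc.; in general $G_2(0)=\int\hat G_2\neq 0$, and (\ref{tmdep}) is \emph{not} available for $m>0$. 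Your rescaled-symbol argument for the $G_2$ piece therefore breaks down exactly where you need it.

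What the paper does instead is choose $G_1$ so that \emph{both} conditions hold simultaneously: $\hat G_1$ is compactly supported \emph{and} $G_1^{(l)}(0)=G^{(l)}(0)$ for $0\le l<M$, so that $G_2=G-G_1$ vanishes to order $M$ at $0$. This cannot be achieved by a cutoff on the Fourier side; it requires the small construction of Lemma~\ref{ccamplem} (prescribing finitely many derivative values at $0$ for a band-limited even Schwartz function). With that decomposition in place, the Huygens argument handles $G_1$ exactly as you describe, and (\ref{tmdep}) applies to $G_2$ for every $m\le M$, which is what the $\partial_\eta^\beta$ (equivalently, integration-by-parts in $\xi$) estimates require.
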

\begin{proof}  Of course $d$ is bounded on ${\bf M} \times {\bf M}$.
Thus, by what we already know about $K_t$, it
suffices to prove (\ref{diagest}) for $0 < t < 1$.  In fact, with
notation as in Proposition \ref{ujvj},
it suffices to show that
(\ref{diagest}) holds whenever $0 < t < 1$ and $d(x,y) < \delta/3$.
Cover ${\bf M}$ by a finite collection of balls $\{B(z_l,\delta/3): 1 \leq l \leq J\}$.
Then any pair of points $(x,y)$ with $d(x,y) < \delta/3$ lie together in one of the
balls $B(z_l,2\delta/3)$.  Thus, it suffices to show that
(\ref{diagest}) holds whenever $0 < t < 1$ and $x,y \in B(z_l,2\delta/3)$ for some
$l$.  Moreover, we may fix a positive integer $M$ and prove that
(\ref{diagest}) holds for all $N \leq M$. \\
\ \\
Now $K_t$ is the kernel of $f(t^2\Delta) = G(t\sqrt{\Delta})$, where
$G(\xi) = f(\xi^2)$.
We claim that it is enough to prove (\ref{diagest}) in each of the
following two cases:
\begin{itemize}
\item[$(i)$] supp$\widehat{G} \subseteq (-1,1)$; and
\item[$(ii)$]$G$ vanishes to order at least $M$ at $0$.
\end{itemize}
Indeed, any even $G$ in ${\mathcal S}(\RR)$ can be written as the sum of
two even functions $G_1$ and $G_2$, where
$G_1$ is of type (i) and $G_2$ is of type (ii).  (To see this, say that,
for $0 \leq l \leq M-1$, $G^{(l)}(0) = a_l$.  It is enough to show that there exists
an even function $G_1$ with supp$\widehat{G_1} \subseteq (-1,1)$, such that
$G_1^{(l)}(0)= a_l$,
for $0 \leq l \leq M-1$, for then we can set $G_2=G-G_1$.  For this, see
Lemma \ref{ccamplem} in the Appendix (Section \ref{cclem}).)\\

In case (i), note that, by Huygens' principle, the support of $K_t$, the kernel of
\begin{equation}\notag
%\label{wavetrck}
G(t{\sqrt \Delta}) = c\int_{-\infty}^{\infty} \hat{G}(s) e^{-ist{\sqrt \Delta}} ds,
\end{equation}
is contained in $\{(x,y): d(x,y) \leq t\}$.  Thus, in this case, we may take $M=0$.

In either case (i) or case (ii),
it is sufficient to show that, for every $\varphi, \psi \in
 C_c^{\infty}(B(z_l,\delta))$, we have that

 \begin{align}\notag
 t^{n+j+k} \left(\frac{d(x,y)}{t}\right)^N  \left|XY\left[\varphi(x)K_t(x,y)\psi(y)\right] \right| \leq C
 \end{align}

whenever $\deg X = j$ and $\deg Y = k$, for all $0 < t < 1$, all $N \leq M$, and all
$x,y \in B(z_l,\delta)$.
(Indeed, we could then take $\varphi, \psi \equiv 1$ on $B(z_l,2\delta/3)$.)
Select $U_i$ as in Proposition \ref{ujvj}, with $B(z_l,3\delta) \subseteq U_i$.
Now, $\varphi(x)K_t(x,y)\psi(y)$ is the
kernel of the pseudodifferential operator $\varphi G(t{\sqrt \Delta})\psi$.  We can use the
coordinate map $\phi_i$ to pull this kernel over to $\RR^n$, thereby obtaining a smooth,
compactly supported kernel
$L_t$, with support in $\RR^n \times \RR^n$.
  Let us change our notation and now
use $x$ and $y$ to denote points in $\RR^n$.  By Proposition \ref{ujvj}, it is enough to show that:
\[t^{n+|\alpha|+|\beta|}\left(\frac{|x-y|}{t}\right)^N
\left|\partial_x^{\alpha} \partial_y^{\beta} L_t(x,y)\right| \leq C \]

for any multiindices $\alpha, \beta$, for all $0 < t < 1$, all $N \leq M$, and all
$x,y \in \RR^n$.

Now let $p_t(x,\xi)$ denote the symbol of the operator with kernel $L_t$.  Then
\begin{equation}\notag
%\label{symkerbk}
L_t(x,y) =
\int e^{i(y-x)\cdot \xi} p_t(x,\xi) d\xi.
\end{equation}

Thus, $\partial_x^{\alpha} \partial_y^{\beta} L_t(x,y)$ is a finite linear combination of
terms of the form
\begin{equation}
\label{typterm}
T = \int e^{i(y-x)\cdot \xi} \xi^{\gamma} \partial_x^{\delta} p_t(x,\xi) d\xi,
\end{equation}
where $|\gamma|, |\delta| \leq |\alpha| + |\beta|$.

In case (i) we may take $M=0$,
so we need only estimate $|T|$, the absolute value of the term $T$ in
(\ref{typterm}).  It will be
enough to show that $|T| \leq Ct^{-n-|\gamma|}$
(for $0 < t < 1$), since $Ct^{-n-|\gamma|} \leq Ct^{-n-|\alpha|-|\beta|}$.  But
\begin{eqnarray*}
|T| & \leq &\int_{|\xi| \leq 1/t} |\xi^{\gamma} \partial_x^{\delta} p_t(x,\xi)|d\xi
+ \int_{|\xi| > 1/t} |\xi^{\gamma} \partial_x^{\delta} p_t(x,\xi)|d\xi \\
& &\leq C\left[A_t t^{-n-|\gamma|} + B_t \int_{1/t}^{\infty} r^{|\gamma|+n-1}r^{-|\gamma|-n-1} dr\right]\\
& &\leq C\left[A_t t^{-n-|\gamma|} + B_t t\right]
\end{eqnarray*}
where $A_t  = \sup_{x,\xi}|\partial_x^{\delta} p_t(x,\xi)|$, and
$B_t  = \sup_{x,\xi}|\xi|^{|\gamma|+n+1}|\partial_x^{\delta} p_t(x,\xi)|$.
But, by (\ref{tmdep})
and the continuity of the map $p \rightarrow p({\sqrt \Delta})$,
from $S^m_{1}(\RR)$ to $OPS^m_{1,0}({\bf M})$ in the cases $m = 0$
and $m = -(|\gamma| + n + 1)$, we find that $A_t  \leq C$ (independent of $0 < t < 1$) and
$B_t  \leq Ct^{-|\gamma|-n-1}$.  Altogether $|T| \leq Ct^{-n-|\gamma|}$, as claimed.  This
completes the proof in case (i).

In case (ii), we need only show that for every $n$-tuple $\nu$ with $|\nu|
\leq M$, we have that $|(x-y)^{\nu}T| \leq Ct^{-n-|\gamma|+|\nu|}$.
Note that
$(x-y)^{\nu} e^{i(y-x)\cdot \xi} = c \partial_{\xi}^{\nu}e^{i(y-x)\cdot \xi}$.
Substituting this in the explicit expression for $(x-y)^{\nu}T$, and repeatedly
integrating by parts in $\xi$, we see that $(x-y)^{\nu}T$ is a finite linear
combination of terms of the form
\begin{equation}\notag
%\label{typterm2}
T' = \int e^{i(y-x)\cdot \xi} \xi^{\kappa}
\partial_x^{\delta} \partial_{\xi}^{\chi} p_t(x,\xi) d\xi,
\end{equation}
where $|\kappa| \leq |\gamma|$, $|\chi| \leq |\nu| \leq M$,
and $|\gamma|- |\kappa| + |\chi| = |\nu|$.
Just as in our estimate for $T$ above, we see that

\begin{eqnarray*}
|T'| & \leq &
\int_{|\xi| \leq 1/t} \left|\xi^{\kappa} \partial_x^{\delta} \partial_{\xi}^{\chi} p_t(x,\xi)\right|d\xi
+ \int_{|\xi| > 1/t} \left|\xi^{\kappa} \partial_x^{\delta} \partial_{\xi}^{\chi} p_t(x,\xi)\right|d\xi \\
& &\leq C\left[A_t t^{-n-|\kappa|} + B_t \int_{1/t}^{\infty} r^{|\kappa|+n-1}r^{-|\kappa|-n-1} dr\right]\\
& &\leq C\left[A_t t^{-n-|\kappa|} + B_t t\right]
\end{eqnarray*}
where now
\[ A_t  = \sup_{x,\xi}\left|\partial_x^{\delta} \partial_{\xi}^{\chi} p_t(x,\xi)\right|
= \sup_{x,\xi}(1 + |\xi|)^{-|\chi|+|\chi|}\left|\partial_x^{\delta} \partial_{\xi}^{\chi} p_t(x,\xi)\right|,\]
and
\[B_t  = \sup_{x.\xi}|\xi|^{|\kappa|+n+1}\left|\partial_x^{\delta} \partial_{\xi}^{\chi} p_t(x,\xi)\right|
\leq
\sup_{x.\xi}(1 + |\xi|)^{|\kappa|+n+1-|\chi|+|\chi|}\left|\partial_x^{\delta} \partial_{\xi}^{\chi} p_t(x,\xi)\right|.\]
But, by (\ref{tmdep}) and the continuity of the map $p \rightarrow p({\sqrt \Delta})$,
from $S^m_{1}(\RR)$ to $OPS^m_{1,0}({\bf M})$ in the cases $m = |\chi|$
and $m = |\chi|-(|\kappa| + n + 1)$, we find that $A_t  \leq Ct^{|\chi|}$ and
$B_t  \leq Ct^{|\chi|-|\kappa|-n-1}$.  ((\ref{tmdep}) may be used here, since
$G$ vanishes to order at least $M$ at $0$, and both $|\chi|$ and
$|\chi|-(|\kappa| + n + 1)$ are less than or equal to $M$.)
Altogether $|T'| \leq Ct^{-n+|\chi|-|\kappa|} =
Ct^{-n-|\gamma|+|\nu|}$, as claimed.  This completes the proof.\end{proof}

\begin{remark} Note that, in Lemma \ref{manmol}, the conclusion (\ref{diagest}) holds even without the hypothesis
$f(0)=0$, {\em provided} $t$ is restricted to lie in the interval $(0,1]$.  Indeed, after the second sentence of
the proof of the lemma, we assumed $0 < t < 1$ and never used the hypothesis that $f(0)=0$.  Of course
(\ref{diagest}) holds also for $t=1$ by continuity.\end{remark}

\section{Continuous ${\cal S}$-Wavelets on Manifolds}\label{continuous-s-wavelets-on-manifolds}

We now turn to our definitions of continuous wavelets and continuous ${\cal S}$-wavelets on
${\bf M}$, which we have motivated in the introduction.

\begin{definition}
\label{ctswvmn}
Suppose that the function $K_t(x,y)$ is smooth for $t > 0$, $x,y \in {\bf M}$.
For $t > 0$, define $T_t: L^2({\bf M}) \rightarrow C^{\infty}({\bf M})$ to be the operator with kernel $K_t$,
so that for all $F \in L^2({\bf M})$ and all $x \in {\bf M}$,
\[ (T_t F)(x) = \int_{\bf M} K_t(x,y) F(y) d\mu(y). \]
As usual, let $P$ denote the projection in $L^2({\bf M})$ onto the space of constant functions.
Then we define $K_t(x,y)$ to be a
{\em continuous wavelet} on ${\bf M}$, provided the following three conditions hold, for some $c > 0$:
\begin{itemize}
\item[(i)] For all $F \in L^2({\bf M})$,
\begin{equation}
\label{ctswveq}
\int_0^{\infty} \|T_t F\|^2_2 \frac{dt}{t} = c \|(I-P)F\|^2_2;
\end{equation}
\item[(ii)]  $\int_{\bf M} K_t(x,y) d\mu(y) = 0$ for all $t > 0$ and all $x \in {\bf M}$ (or, equivalently, $T_t(1) = 0$ for all
$t > 0$);
\item[(iii)]  $\int_{\bf M} K_t(x,y) d\mu(x) = 0$ for all $t > 0$ and all $y \in {\bf M}$ (or, equivalently, $T_t^*(1) = 0$ for
all $t > 0$).
\end{itemize}
\end{definition}

\begin{definition}
\label{ctsSwavmn}
Suppose $K_t(x,y)$ is a continuous wavelet on ${\bf M}$.  We then say that
$K_t(x,y)$ is a continuous ${\cal S}$-wavelet on ${\bf M}$, if the following additional condition holds:
\begin{itemize}
\item[(iv)] For every pair of
$C^{\infty}$ differential operators $X$ (in $x$) and $Y$ (in $y$) on ${\bf M}$,
and for every integer $N \geq 0$, there exists $C_{N,X,Y}$ as follows.  Suppose
$\deg X = j$ and $\deg Y = k$.  Then
\begin{equation}
\label{diagest1}
t^{n+j+k} |(\frac{d(x,y)}{t})^N XYK_t(x,y)| \leq C_{N,X,Y}
\end{equation}
for all $t > 0$ and all $x,y \in {\bf M}$.
\end{itemize}
\end{definition}

We then have the following result:
\begin{theorem}
\label{ctswvthm}
Say $f_0 \in {\mathcal S}(\RR^+)$, $f_0 \not\equiv 0$, and let $f(s) = sf_0(s)$.  For $t > 0$, let $K_t$ be the kernel
of $f(t^2 \Delta)$.  Then $K_t(x,y)$ is a continuous ${\cal S}$-wavelet on ${\bf M}$.
\end{theorem}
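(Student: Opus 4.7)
The plan is to verify the four defining conditions of Definitions \ref{ctswvmn} and \ref{ctsSwavmn} in turn. Condition (iv), the pointwise differential-geometric size bound, is precisely the content of Lemma \ref{manmol}, applicable because our $f(s)=sf_0(s)$ satisfies $f(0)=0$; so (iv) requires no further work. For (ii) and (iii), the same vanishing $f(0)=0$ lets me invoke the bulleted consequence of the eigenfunction expansion (\ref{kerexp}) in Section \ref{kernels}, which asserts precisely that $\int_{\bf M}K_t(x,y)\,d\mu(x)=0$ and $\int_{\bf M}K_t(x,y)\,d\mu(y)=0$; this rests on the orthogonality of the constant eigenfunction $u_0$ to all other $u_l$.

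The only substantive step is the Calder\'on reproducing identity (i). My plan is to apply Lemma \ref{gelmaylem}(a) with $T=\Delta$ and $l=1$, which is permitted since $f_0\in\mathcal{S}(\RR^+)$. Two preliminary observations: first, $\Delta$ is a positive self-adjoint operator on $L^2({\bf M})$, as established in Section \ref{kernels}; second, since ${\bf M}$ is connected and compact, harmonic functions on ${\bf M}$ are constant, so the null-space projection $P$ of Lemma \ref{gelmaylem} coincides with the constant-function projection $P$ of Definition \ref{ctswvmn}. The lemma then yields
\[
\int_0^{\infty}|f|^2(s\Delta)\,\frac{ds}{s}=c(I-P)
\]
in the strong operator topology, with $c=\int_0^{\infty}|f(r)|^2\,dr/r$. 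I will pair both sides against an arbitrary $F\in L^2({\bf M})$; the spectral-theorem identity $|f|^2(s\Delta)=f(s\Delta)^{*}f(s\Delta)$ turns the scalar pairing into $\int_0^{\infty}\|f(s\Delta)F\|_2^2\,ds/s=c\,\|(I-P)F\|_2^2$. Finally, since $T_t=f(t^2\Delta)$ by construction, the substitution $s=t^2$ (so $ds/s=2\,dt/t$) converts this to
\[
\int_0^{\infty}\|T_tF\|_2^2\,\frac{dt}{t}=\frac{c}{2}\,\|(I-P)F\|_2^2,
\]
which is exactly (\ref{ctswveq}) with constant $c/2$.

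I do not expect a genuine obstacle here: the theorem is essentially the assembly of Lemma \ref{gelmaylem}(a), the orthogonality of $u_0$ to the remaining eigenfunctions, and the already-established Lemma \ref{manmol}. The only points demanding a shred of care are the correct identification of the null-space projection $P$ with the constant-function projection, and tracking the factor of $2$ introduced by the change of variables $s=t^2$ in passing from Lemma \ref{gelmaylem}'s operator identity to the wavelet-style scalar identity (\ref{ctswveq}).
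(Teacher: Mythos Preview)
Your proposal is correct and follows essentially the same route as the paper: invoke Lemma \ref{manmol} for condition (iv), the bulleted consequence of (\ref{kerexp}) for (ii) and (iii), and derive (i) by pairing the strong-limit identity of Lemma \ref{gelmaylem}(a) (i.e.\ (\ref{strint})) against $F$ and substituting $t\mapsto t^2$. Your extra remarks---identifying $P$ with the projection onto constants via connectedness of ${\bf M}$, and spelling out $|f|^2(s\Delta)=f(s\Delta)^*f(s\Delta)$---are helpful clarifications but do not alter the argument.
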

\begin{proof} Of course, condition (iv) is Lemma \ref{manmol}.  As we have seen,
conditions (ii) and (iii) of Definitio \ref{ctswvmn} are
immediate consequences of (\ref{kerexp}), as for condition (i), say $F \in L^2({\bf M})$.
We need only take the inner product of both sides of (\ref{strint}) with $F$ to see that,
if  $c = \int_0^{\infty} |f(t)|^2 \frac{dt}{t}$, then
\[ \int_{0}^{\infty} \|f(t\Delta)F\|_2^2 \frac{dt}{t} = c\|(I-P)F\|_2^2.\]
Replacing $t$ by $t^2$ in this equation, we find that
\[ \int_{0}^{\infty} \|f(t^2\Delta)F\|_2^2 \frac{dt}{t} = \frac{c}{2}\|(I-P)F\|_2^2,\]
which yields condition (i) at once.  This completes the proof.\end{proof}

As for properties of continuous wavelets, we first remark that it is a standard, simple matter to show the following
result, which generalizes (\ref{strint}) (with $T = \Delta$ there):
\begin{proposition}
\label{ctswvinvthm}
Suppose $K_t(x,y)$ is a continuous wavelet on ${\bf M}$, and, for $t > 0$, let $T_t$ be the operator on $L^2({\bf M})$
with kernel $K_t$.  Then for any $F \in (I-P)L^2({\bf M})$, we may reconstruct $F$ through the identity
\begin{equation}
\label{ctswveq1}
\int_0^{\infty} T_t^* T_t F \frac{dt}{t} = cF.
\end{equation}
Here the integral on the left side of (\ref{ctswveq1}) conveges unconditionally in $L^2$.
\end{proposition}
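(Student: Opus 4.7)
The plan is to interpret the integral in (\ref{ctswveq1}) as a weak integral of positive operators and then combine polarization of (\ref{ctswveq}) with the standard positive-operator inequality $A^2 \leq \|A\| A$.

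For any measurable $E \subseteq (0,\infty)$, the sesquilinear form
\[ B_E(F,G) := \int_E \langle T_t F, T_t G \rangle \, \frac{dt}{t} \]
is well defined, and Cauchy--Schwarz in $t$ together with (\ref{ctswveq}) gives $|B_E(F,G)| \leq c \|F\|_2 \|G\|_2$. By Riesz representation there is a unique bounded operator $A_E$ on $L^2({\bf M})$ with $B_E(F,G) = \langle A_E F, G\rangle$; it is positive and self-adjoint, satisfies $\|A_E\| \leq c$, and formally equals $\int_E T_t^* T_t \, dt/t$. The content of (\ref{ctswveq1}) — and of unconditional convergence — is then the claim that $A_E F \to cF$ in $L^2$ whenever $F \in (I-P)L^2({\bf M})$ and $E$ exhausts $(0,\infty)$ in the sense that $\int_{(0,\infty)\setminus E} \|T_t F\|_2^2 \, dt/t \to 0$; allowing arbitrary measurable $E$, rather than just intervals $[\varepsilon, N]$, is what makes the convergence unconditional.

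Polarizing (\ref{ctswveq}) yields $B_{(0,\infty)}(F,G) = c \langle (I-P)F, G \rangle$ for all $F,G \in L^2({\bf M})$. In particular, if $F \in (I-P)L^2({\bf M})$, then $\langle A_E F, F\rangle = \int_E \|T_t F\|_2^2 \, dt/t \to c \|F\|_2^2$ as $E \uparrow (0,\infty)$. The heart of the argument is then the following: since $A_E \geq 0$ with $\|A_E\| \leq c$, we have $A_E^2 \leq c A_E$, hence $\|A_E F\|_2^2 = \langle A_E^2 F, F\rangle \leq c \langle A_E F, F\rangle$. Expanding
\[ \|cF - A_E F\|_2^2 = c^2 \|F\|_2^2 - 2c \langle A_E F, F\rangle + \|A_E F\|_2^2 \leq c^2\|F\|_2^2 - c\langle A_E F, F\rangle, \]
the right side tends to $c^2 \|F\|_2^2 - c \cdot c\|F\|_2^2 = 0$, giving the desired strong convergence.

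The only real obstacle is organizational: pinning down the ``correct'' interpretation of the improper operator-valued integral so that unconditional convergence has meaning, and checking that the weak-integral operator $A_E$ is bounded. Once that setup is in place, the positive-operator inequality $A_E^2 \leq c A_E$ together with the polarized isometry (\ref{ctswveq}) does all the work; no further analytic input (e.g., no dominated convergence on the $t$-integral) is required.
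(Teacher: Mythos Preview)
Your argument is correct, and it reaches the same quantitative conclusion as the paper's, but by a genuinely different route. The paper introduces the vector-valued Hilbert space $\mathcal{H}_2 = L^2(\RR^+, (I-P)L^2({\bf M}), dt/t)$, defines the analysis operator $U: F \mapsto (T_tF)_{t>0}$, and then bounds the synthesis operator $V(G_t)_{t>0} = \int_0^\infty T_t^* G_t\,dt/t$ via duality and Cauchy--Schwarz in $t$: for $E$ compactly contained in $(0,\infty)$,
\[
\Big\|\int_E T_t^* G_t\,\frac{dt}{t}\Big\| \leq \sqrt{c}\Big[\int_E \|G_t\|^2\,\frac{dt}{t}\Big]^{1/2}.
\]
Unconditional convergence follows, and since $V=U^*$, polarizing $\langle U^*UF,F\rangle = c\|F\|^2$ immediately gives $U^*UF=cF$. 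Your approach instead stays entirely inside $L^2({\bf M})$: you define $A_E$ weakly, then exploit positivity via $A_E^2 \leq \|A_E\|A_E \leq cA_E$ to upgrade the weak convergence $\langle A_EF,F\rangle \to c\|F\|^2$ to norm convergence. Amusingly, both arguments yield the identical estimate $\|cF - A_EF\|^2 \leq c\int_{(0,\infty)\setminus E}\|T_tF\|^2\,dt/t$. The paper's framework is more robust---the synthesis bound holds for arbitrary $(G_t)\in\mathcal{H}_2$, not just $G_t=T_tF$, and would survive replacing $T_t^*T_t$ by $S_t^*T_t$---whereas your positive-operator trick is specific to the form $T_t^*T_t$ but avoids introducing the auxiliary Hilbert space and is arguably more elementary.
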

\begin{proof}   Let ${\mathcal H}_1$ be the Hilbert space $(I-P)L^2({\bf M})$, and let ${\mathcal H}_2$ be the
Hilbert space $L^2(\RR^+, {\mathcal H}_1, dt/t)$.
 By our definition of continuous wavelet,
we may define  a bounded operator $U: {\mathcal H}_1 \rightarrow {\mathcal H}_2$ by
\[ UF = (T_t F)_{t > 0}. \]
Moreover, we may define a bounded operator $V: {\mathcal H}_2 \rightarrow {\mathcal H}_1$ by
\[ V(G_t)_{t > 0} = \int_0^{\infty} T_t^*G_t \frac{dt}{t} \]
where the integral converges uncondtionally in ${\mathcal H}_1$.  Indeed, let
$\|\:\|$ denote $\|\:\|_{{\mathcal H}_1}$, and let $S = \{F \in {\mathcal H}_1: \|F\| = 1\}$.
If $E \subseteq (0,\infty)$ is measurable
and contained in a compact subset of $(0,\infty)$, we have
\begin{align}\notag
\|\int_E T_t^* G_t \frac{dt}{t}\| =
 \sup_{F \in S} |\int_E  \langle  T_t^* G_t,F  \rangle  \frac{dt}{t} |
= &\sup_{F \in S} |\int_E  \langle  G_t,T_t F  \rangle  \frac{dt}{t} |\\\notag
&
\leq \left[\int_E \|G_t\|^2 \frac{dt}{t}\right]^{1/2} \left[\sup_{F \in S}|\int_E \|T_t F\|^2  \frac{dt}{t}\right]^{1/2};
\end{align}
but, by (\ref{ctswveq}), this is less than or equal to $[\int_E \|G_t\|^2 \frac{dt}{t}]^{1/2}$,
and the unconditional convergence follows.  One now readily checks that $V = U^*$.  By (\ref{ctswveq}),
$ \langle  U^*UF, F  \rangle  = c\|F\|^2$ for all $F \in {\mathcal H}_1$.  Polarizing this identity, we find
(\ref{ctswveq1}), as desired.\end{proof}

As an example of the usefulness of continuous ${\cal S}$-wavelets,
we now show the following direct analogue of a theorem of
Holschneider and Tchamitchian (\cite{hotch}):
\begin{theorem}
\label{hldchar}
Let $K_t(x,y)$ be a continuous ${\cal S}$-wavelet on ${\bf M}$, and, for $t > 0$,
let $T_t$ be the operator on $L^2$ with kernel $K_t$.
Suppose $F \in L^2({\bf M})$.  Then:\\
(a) If $F$ is H\"older continuous, with H\"older exponent $\alpha$ ($0 < \alpha \leq 1$), then for
some $C > 0$,
\begin{equation}
\label{hldcharway}
\|T_t F\| \leq C t^{\alpha}
\end{equation}
for all $t > 0$.  (Here $\|\:\|$ denotes sup norm.)\\
(b) Conversely, say $0 < \alpha < 1$, $C > 0$, and that $F$ satisfies (\ref{hldcharway})
for all $t > 0$.  Then $F$ is H\"older continuous, with H\"older exponent $\alpha$.
\end{theorem}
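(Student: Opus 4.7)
The plan is to adapt the Holschneider--Tchamitchian argument \cite{hotch} to the manifold setting, with condition (iv) (equivalently Lemma \ref{manmol}) playing the role of the Schwartz decay of the Euclidean wavelet. Throughout, combining the cases $N=0$ and $N$ large of (\ref{diagest1}) (with $X=Y=I$) yields, for any $N$, a unified pointwise kernel bound
\[ |K_t(x,y)| \leq C_N t^{-n}(1+d(x,y)/t)^{-N}, \]
and, taking $X$ to be a first-order differential operator in (\ref{diagest1}), an analogous bound on $\nabla_x K_t$.

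For part (a), the idea is to exploit the cancellation in condition (ii): one writes $(T_tF)(x) = \int K_t(x,y)[F(y)-F(x)]\,d\mu(y)$, inserts the H\"older bound $|F(y)-F(x)| \leq C d(x,y)^\alpha = Ct^\alpha(d(x,y)/t)^\alpha$, and reduces to integrating $t^{-n}(1+d(x,y)/t)^{-(N-\alpha)}$ against $d\mu(y)$, which is $O(1)$ by (\ref{ptestm}) for $N$ large. The conclusion $\|T_tF\|_\infty \leq Ct^\alpha$ follows.

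For part (b), first reduce to $F \in (I-P)L^2({\bf M})$: condition (ii) gives $T_t F = T_t(I-P)F$, and adding a constant to $F$ does not alter H\"older continuity. Proposition \ref{ctswvinvthm} then suggests defining
\[ \tilde{F}(x) := \frac{1}{c}\int_0^\infty \int_{\bf M} \overline{K_t(y,x)}\,(T_tF)(y)\,d\mu(y)\,\frac{dt}{t}; \]
once this integral is shown to converge absolutely and to define a H\"older function with exponent $\alpha$, the $L^2$ reconstruction of Proposition \ref{ctswvinvthm} forces $F = \tilde F$ almost everywhere. Fix $x,x' \in {\bf M}$, set $\rho = d(x,x')$, write $\Phi_t(x,y) := \overline{K_t(y,x)}$, and split the $t$-integral at $t = \rho$. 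For $t < \rho$, use $\|T_tF\|_\infty \leq Ct^\alpha$ together with $\int_{\bf M}|\Phi_t(z,y)|\,d\mu(y) \leq C$ (from the kernel bound above and (\ref{ptestm})) to bound the inner integral by $Ct^\alpha$, contributing $O(\rho^\alpha/\alpha)$ after integration in $dt/t$. For $t \geq \rho$, connect $x$ to $x'$ by a minimizing geodesic $\gamma$ of length $\rho \leq t$, and use (\ref{alcmpN}) (with $M = \rho/t \leq 1$) to observe that $(1+d(z,y)/t)$ is comparable to $(1+d(x,y)/t)$ uniformly for $z \in \gamma$; the mean value theorem along $\gamma$ combined with the gradient bound then yields $|\Phi_t(x,y) - \Phi_t(x',y)| \leq C\rho\, t^{-n-1}(1+d(x,y)/t)^{-N}$, whose $y$-integral is $O(\rho/t)$, producing an inner integral of size $O(\rho\, t^{\alpha-1})$. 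The tail $\int_\rho^\infty \rho\, t^{\alpha-2}\,dt$ converges precisely because $\alpha < 1$, contributing $O(\rho^\alpha/(1-\alpha))$. Summing the two pieces gives $|\tilde F(x) - \tilde F(x')| \leq C\rho^\alpha$.

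The main technical obstacle is the large-scale piece: one must ensure the gradient bound on $\Phi_t$ along the short geodesic joining $x$ and $x'$ is quantitatively uniform in $y$, which is exactly what (\ref{alcmpN}) delivers; and the hypothesis $\alpha < 1$ is sharp because it is precisely what makes $\int_\rho^\infty t^{\alpha-2}\,dt$ finite. A minor subsidiary point is the pointwise interpretation of the reconstruction formula, bypassed by defining $\tilde F$ directly via the absolutely convergent integral above and then invoking a.e.\ uniqueness from the $L^2$ identity of Proposition \ref{ctswvinvthm}.
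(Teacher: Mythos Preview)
Your outline follows the paper's argument closely and part (a) is exactly the paper's proof.  For part (b) there is one genuine gap: you assert that the integral defining $\tilde F(x)$ is absolutely convergent, but the only bound you invoke on $T_tF$ is the hypothesis $\|T_tF\|_\infty \leq Ct^{\alpha}$.  With this alone the inner integral is $O(t^{\alpha})$, and $\int_0^{\infty} t^{\alpha-1}\,dt$ diverges at infinity.  What is missing is a large-$t$ decay estimate, and this is precisely where the hypothesis $F\in L^2({\bf M})$ enters.  The paper uses Cauchy--Schwarz together with the crude bound $|K_t(x,y)|\leq Ct^{-n}$ (the case $N=0$ of (\ref{diagest1})) to get $\|T_tF\|_\infty\leq Ct^{-n}$; combining this with $\|K_t^y\|_1\leq C$ yields $|(T_t^*T_tF)(x)|\leq Ct^{-n}$ for all $t$, so that $\int_1^{\infty} t^{-n-1}\,dt<\infty$ handles the tail.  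This simultaneously shows $F\in L^{\infty}$ and justifies the a.e.\ pointwise reconstruction.  Without this step your $\tilde F$ is not known to exist, and you also cannot dispose of far-apart pairs $x,x'$ (for which one wants $|F(x)-F(x')|\leq 2\|F\|_\infty \leq Cd(x,x')^{\alpha}$).

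Once that gap is filled, your two-piece split at $t=\rho$ for the H\"older difference is correct and in fact slightly cleaner than the paper's: the paper splits into three ranges $[0,\rho]$, $[\rho,1]$, $[1,\infty)$ and uses the $t^{-n}$ bound on the last, whereas you observe that $\int_\rho^{\infty}\rho\,t^{\alpha-2}\,dt$ already converges because $\alpha<1$, so the third piece is unnecessary for the difference estimate.  The paper carries out the gradient/mean-value step in local coordinates via Proposition~\ref{ujvj} rather than along a geodesic, but your geodesic version with (\ref{alcmpN}) is an equally valid route.
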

\begin{proof} For (a), we just note:
 \begin{align}\notag
\left| (T_t F)(x) \right|&= \left| \int
 F(y)K_t(x,y)d\mu(y)\right| \label{goholder1}\\\notag
 &=
  \left| \int \left( F(x)-F(y)\right)K_t(x,y) d\mu(y)\right|\\\notag
    &\leq Ct^{-n}\int d(x,y)^{\alpha} \left[1+d(x,y)/t\right]^{-n-1-\alpha} d\mu(y)\\\notag
     &\leq Ct^{\alpha-n}\int \left[1+d(x,y)/t\right]^{-n-1} d\mu(y)\\\notag
 &\leq Ct^{\alpha}\notag
  \end{align}
by (\ref{ptestm}), as desired.

For (b), of course $PF$, being constant, is H\"older continuous of any exponent. Since
$T_t 1= 0$, we may assume that $F = (I-P)F$.

Set $g_t = T_t F$, so that
\begin{equation}
\label{gtcta}
\|g_t\| \leq Ct^{\alpha}.
\end{equation}
For $x, y \in {\bf M}$, set $$K_{t,x}(y) =
K_t^y(x) = K_t(x,y).$$  Thus, for all $x$, $g_t(x) = \langle K_{t,x},\overline{F}\rangle$.
Since we are assuming $F \in L^2$, by Cauchy-Schwartz and (\ref{diagest1}), we obtain the
additional estimate
\begin{equation}
\label{gtctn}
\|g_t\| \leq Ct^{-n}.
\end{equation}
By (\ref{diagest1}), we have that $|K_t(x,y)| \leq Ct^{-n}(1 + d(x,y)/t)^{-n-1}$, so by
(\ref{ptestm}),
\begin{equation}
\label{kty1}
\|K_t^y\|_1 \leq C
\end{equation}
for any $y$.  Thus, for any $y$,
\begin{equation}\notag
%\label{ttsfy}
|(T_t^* T_t F)(y)| = |\langle g_t, K_t^y \rangle| \leq \|g_t\|\:\|K_t^y\|_1 \leq C\|g_t\|.
\end{equation}
Accordingly, by (\ref{gtcta}) and (\ref{gtctn}), for any $y$,
\begin{equation}\notag
%\label{ttintcg}
\int_0^{\infty} |(T_t^* T_t F)(y)| \frac{dt}{t} \leq C\left(\int_0^1 t^{\alpha - 1} dt +
\int_1^{\infty} t^{-n - 1} dt\right)  \leq C.
\end{equation}
By Proposition \ref{ctswvinvthm}, we now see that for almost every $y$,
\[ cF(y) = \int_0^{\infty} T_t^* T_t F(y) \frac{dt}{t}, \]
and from this, that $F \in L^{\infty}$.

To complete the proof, we claim that it suffices to show that if
$d(y,z) \leq \min(t,\delta)$, then
\begin{equation}
\label{ktxyz}
\int_{\bf M} |K_t(x,y)-K_t(x,z)| d\mu(x) \leq Cd(y,z)/t.
\end{equation}
For then, by (\ref{gtcta}), (\ref{gtctn}), (\ref{kty1}) and (\ref{ktxyz}), we would have,
if $d(y,z) < \delta$, then
\begin{eqnarray*}
c|F(y)-F(z)| & \leq & \int_0^{\infty} \int_{\bf M} |K_t(x,y)-K_t(x,z)|d\mu(x) \|g_t\| \frac{dt}{t}\\
& \leq & C\left[ \int_0^{d(y,z)} t^{\alpha-1} dt + d(y,z)\int_{d(y,z)}^1 t^{\alpha-2} dt
+ d(y,z)\int_1^{\infty} t^{-n-2} dt\right]\\
& \leq & C d(y,z)^{\alpha},
\end{eqnarray*}
as needed.

To prove (\ref{ktxyz}), choose
$U_i \supseteq B(y,3\delta)$, and let us work in the local coordinates on $U_i$ obtained
from $\phi_i$.  We use the mean value theorem.
By (\ref{diagest1}), for any $x \in {\bf M}$, there is
point $w_x$ on the line segment joining $y$ to $z$ such that
\[ |K_t(x,y)-K_t(x,z)| \leq C|y-z| t^{-n-1}(1 + d(x,w_x)/t)^{-n-1}.\]
By Proposition \ref{ujvj}, if $w$ is any point on that line segment,
\[ d(y,w) \leq c_2|y-w| \leq c_2|y-z| \leq c_1c_2d(y,z) \leq c_1c_2t. \]
Thus the diameter of the line segment is at most $2c_1c_2t$, and so, by (\ref{alcmpN}), we have
\[ |K_t(x,y)-K_t(x,z)| \leq Cd(y,z) t^{-n-1}(1 + d(x,y)/t)^{-n-1}.\]
(\ref{ktxyz}) now follows from (\ref{ptestm}), as desired.\end{proof}

\section{Homogeneous Manifolds}\label{homogeneous-manifolds}
In this section look at the situation in which ${\bf M}$ has a
transitive group $G$ of smooth metric isometries.  (Such manifolds are usually called
homogneous manifolds.)  Obvious examples of such manifolds are the
sphere $S^n$, where we take $G$ to be the group $SO(n+1)$ of rotations, and the torus
${\bf T}^n = (S^1)^n$, where we take $G$ to be the group $[SO(2)]^n$.

If $T \in G$ and $F$ is a function on ${\bf M}$, we define the function $TF$ on ${\bf M}$ by
$(TF)(x) = F(T^{-1}x)$.   Then $T: L^2({\bf M}) \rightarrow L^2({\bf M})$ is a unitary operator
which commutes with the Laplace-Beltrami operator $\Delta$.

Consequently, as operators on $L^2({\bf M})$,
$f(t^2\Delta)$ commutes with elements of $G$ for any bounded Borel function $f$ on $\RR$,
and in particular, if $f \in {\mathcal S}(\RR)$, which we now assume.

Thus, if $T \in G$, $F \in L^2({\bf M})$ and $x\in {\bf M}$, we have
\begin{align} \notag
\int_{\bf M} K_t(Tx, Ty) F(y) d\mu(y) &= \int_{\bf M} K_t(Tx, y) F(T^{-1}y) d\mu(y) \\\notag &= [f(t^2\Delta)(TF)](Tx)\\\notag
&= T([f(t^2\Delta)(F)])(Tx);
\end{align}
but this is just $[f(t^2\Delta)(F)](x) = \int_{\bf M} K_t(x, y) F(y) d\mu(y)$, so
\begin{equation}
\label{rotinv}
K_t(Tx,Ty) = K_t(x,y)
\end{equation}
for all $x,y \in {\bf M}$.
Thus, since $G$ is transitive, if $x_0$ is any fixed point in ${\bf M}$, once one knows
$K_t(x_0,y)$ for all $y$, then one knows $K_t(x,y)$ for all $x,y$.

In the analogous situation on $\RR^n$, $K_t(x,y)$ has the form
$t^{-n} \psi((x-y)/t)$ for some $\psi \in \mathcal{S}$, and so $K_t(x,y) =
K_t(Tx,Ty)$ for any {\em translation} $T$ on $\RR^n$.  Equation (\ref{rotinv}) is a natural
analogue of this fact for ${\bf M}$.

It is interesting to note that one has
\begin{equation}
\label{indlink1}
K_t(x,x) =  \mbox{tr}(f(t^2\Delta))/\mbox{vol}({\bf M})
\end{equation}
for all $x$ and all $f \in {\mathcal S}(\RR^+)$.
Indeed, by (\ref{rotinv}), $K_t(x,x)$ is constant for $x \in {\bf M}$.  Accordingly
\[ \mbox{vol}({\bf M}) K_t(x,x) = \int_{\bf M} K_t(y,y) d\mu(y)
= \sum_l f(t^2\lambda_l)\int_{\bf M}|u_l(y)|^2 d\mu(y)
= \sum_l f(t^2\lambda_l) = \mbox{tr}(f(t^2\Delta))\]
as claimed.

Say now $c > 0$, and let us look at the
special case
\begin{equation}
\label{mxstup}
f(s) = (s/c)e^{-(s/c)}.
\end{equation}
We have $\mbox{tr}(f(t^2\Delta))
= \mbox{tr}((t^2/c)\Delta e^{-(t^2\Delta)/c})$.
A well known fact, usually associated with the
heat kernel approach to index theorems (\cite{MinPle}, \cite{Gilkey},
\cite{Polt} and \cite{Gilkeybook}, pages 58 and 316), is that as $s \rightarrow 0^+$,
\begin{equation}
\label{heattras}
\mbox{tr}(e^{-s\Delta}) \sim \sum_{m=0}^{\infty} s^{m-n/2}a_m,
\end{equation}
where
\begin{equation}
\label{heattr0}
a_0 = (4\pi)^{-n/2} \mbox{vol}({\bf M}).
\end{equation}
Differentiating with respect to $s$ one finds that
\[ \mbox{tr}(\Delta e^{-s\Delta}) \sim \sum_{m=0}^{\infty} (\frac{n}{2}-m)s^{m-1-n/2}a_m. \]
To lowest order, then,
\begin{equation}
\label{mexhtxx}
 K_t(x,x) =
\mbox{tr}((t^2/c)\Delta e^{-(t^2\Delta)/c})/\mbox{vol}({\bf M}) \sim \frac{nc t^{-n}}{2(4\pi)^{n/2}}.
\end{equation}
\ \\
Again let $f$ be general, but now let us look at the special case ${\bf M} = {\bf T}^n$, the torus.
We write ${\bf T}^n = \left\{(e^{2 \pi i r_1},\ldots, e^{2 \pi i r_n}):\; -1/2 < r_1, \ldots, r_n \leq 1/2\right\}$.
Here an orthonormal basis of eigenfunctions is given simply by $\{ e^{2\pi i m \cdot r}: m \in {\ZZ}^n \}$.  The
Laplace-Beltrami operator $\Delta$ is just $-\sum_{l = 1}^n (\partial/ \partial r_l)^2$, and the
eigenvalues are given through $\Delta e^{2\pi i m \cdot r} = 4 \pi^2 \|m\|^2 e^{2\pi i m \cdot r}$.
(Here $\|m\|^2 = m_1^2 + \ldots m_n^2$.)  The kernel $K_t(r,s)$ of $f(t^2\Delta)$ is given by
\[ K_t(r,s) = \sum_{m \in \ZZ^n} f(4\pi^2 t^2 \|m\|^2) e^{2\pi i m \cdot (r-s)}. \]
Thus, if $F \in L^2({\bf T}^n)$,
\[ [f(t^2 \Delta)F](r) = \int_{-1/2}^{1/2} \ldots \int_{-1/2}^{1/2} F(s) K_t(r,s) ds_1\ldots ds_n = [F*h_t(r)] \]
where
\[ h_t(s) = \sum_{m \in (\ZZ)^n} f(4\pi^2 t^2 \|m\|^2) e^{2\pi i m \cdot s}, \]
and $*$ denotes the natural convolution on ${\bf T}^n$.

We specialize now to the case $n = 2$, $f(u) = ue^{-u/4\pi}/4\pi^2$.  We free the letter $n$ for other uses.
For $t > 0$ define the functions $U_t, V_t : \RR \rightarrow \CC$ by
\begin{equation}
\label{utdf}
U_t(x) = \sum_{n=-\infty}^{\infty} e^{-\pi t^2 n^2} e^{2\pi i n x},
\end{equation}
\begin{equation}
\label{vtdf}
V_t(y) = \sum_{n=-\infty}^{\infty}(nt)^2 e^{-\pi t^2 n^2} e^{2\pi i n y}.
\end{equation}
It is then easy to calculate that
\begin{equation}\notag
%\label{htutvt}
h_t(s_1,s_2) = U_t(s_1) V_t(s_2) + U_t(s_2) V_t(s_1).
\end{equation}
$h_t$ is the ``Mexican hat" on the torus ${\bf T}^2$.  One can use these equations to draw its graph.  One
of course needs to approximate the series in (\ref{utdf}) and (\ref{vtdf}) by finite sums.  This is a simple
matter if $t$ is greater than $1$, but if $t$ is small the series do not converge very quickly.  Fortunately,
however, we can give alternative series expansions for $U_t$ and $V_t$
which do converge very quickly for $0 < t < 1$.   We do this by using the Poisson summation formula, or rather,
its proof: if $g \in {\mathcal S}(\RR)$, then the periodic function $G(x) = \sum_{n = -\infty}^{\infty} g(x + n)$
has Fourier series $\sum_{n = -\infty} \check{g}(n) e^{2\pi i nx}$, and hence, $G(x)$ equals the latter series.
(Here we use the inverse Fourier transform $\check{g} (x) = \int_{-\infty}^{\infty} g(\xi) e^{-2\pi i x \xi} d\xi$.)
Applying this with $\check{g}(y) = e^{-\pi t^2 y^2}$, we obtain the formula
\begin{equation}
\label{utalt}
U_t(x) = \frac{1}{t} \sum_{n=-\infty}^{\infty} e^{-\pi (n+x)^2/t^2}.
\end{equation}
Taking instead $\check{g}(y) = (ty)^2 e^{-\pi t^2 y^2}$, we obtain the formula
\begin{equation}
\label{vtalt}
V_t(x) = \frac{1}{t} \sum_{n=-\infty}^{\infty}(\frac{1}{2\pi} - (\frac{n+x}{t})^2) e^{-\pi (n+x)^2/t^2}.
\end{equation}
(\ref{utalt}) and (\ref{vtalt}) converge very quickly for $t$ small, making it practical to draw
pictures of $h_t$ for $t$ small.  We include
pictures, obtained by using Maple, of the Mexican hat
functions $\pi h_t(r_1,r_2)$ $-1/2 < r_1, r_2 \leq 1/2$,
for $t = 2$ (Figure 1, left), $t = 1/2$ (Figure 1, middle) and $t = 1/8$
(Figure 1, right).
%% First set of figs go here
\begin{figure}
\begin{center}
\begin{tabular}{ccc}
  \includegraphics[scale=0.32]{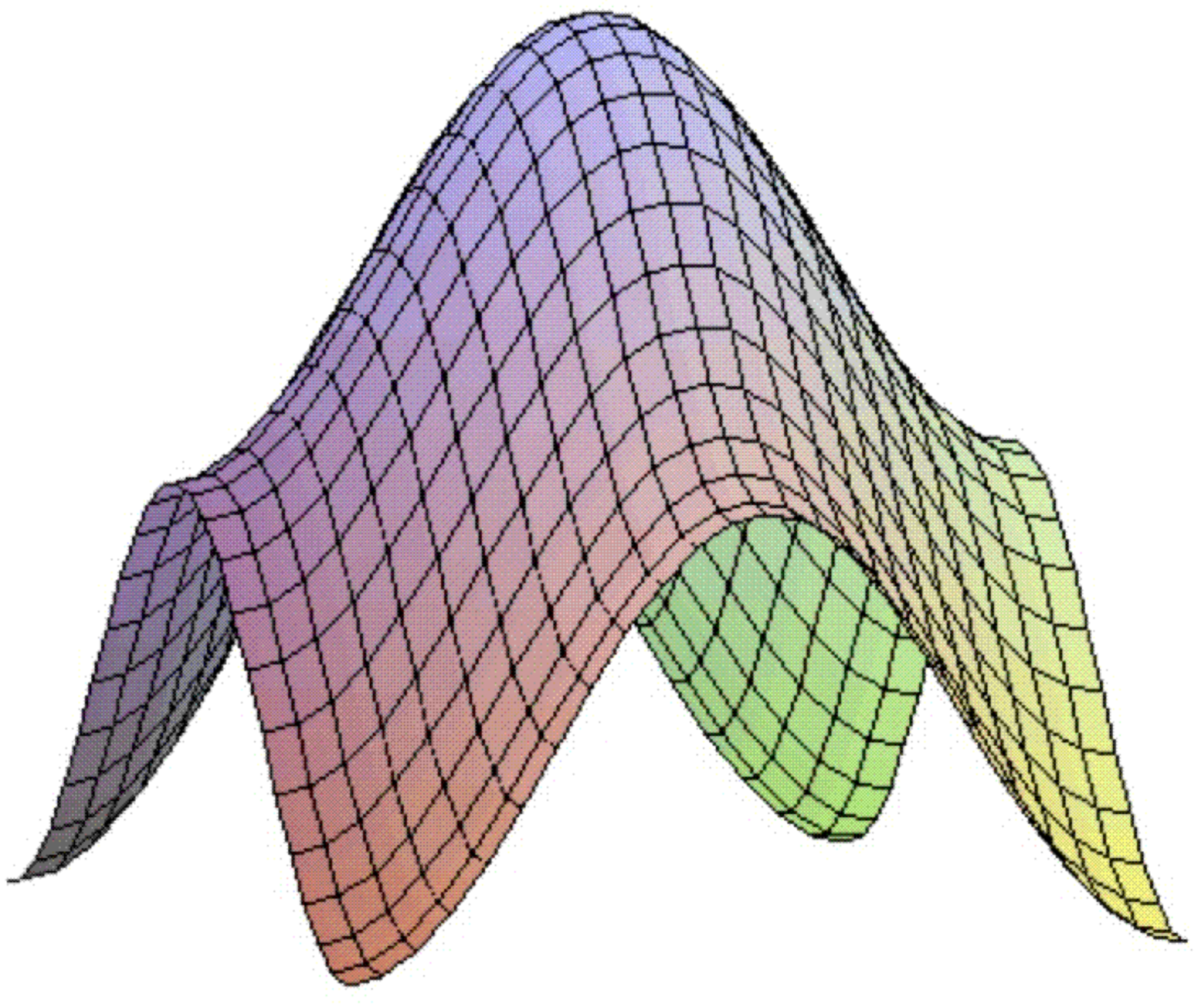}  &
      \includegraphics[scale=0.32]{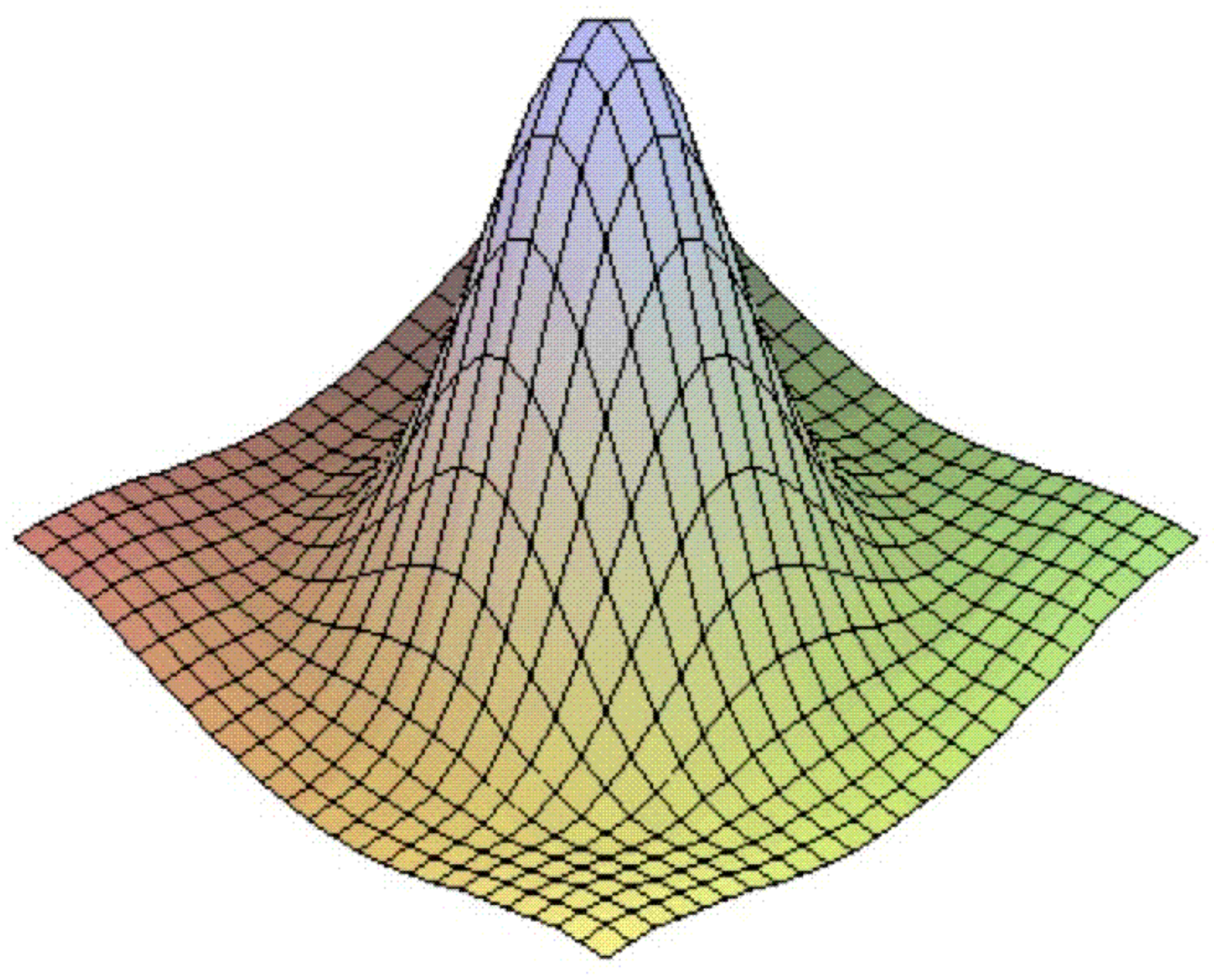}&
      \includegraphics[scale=0.32]{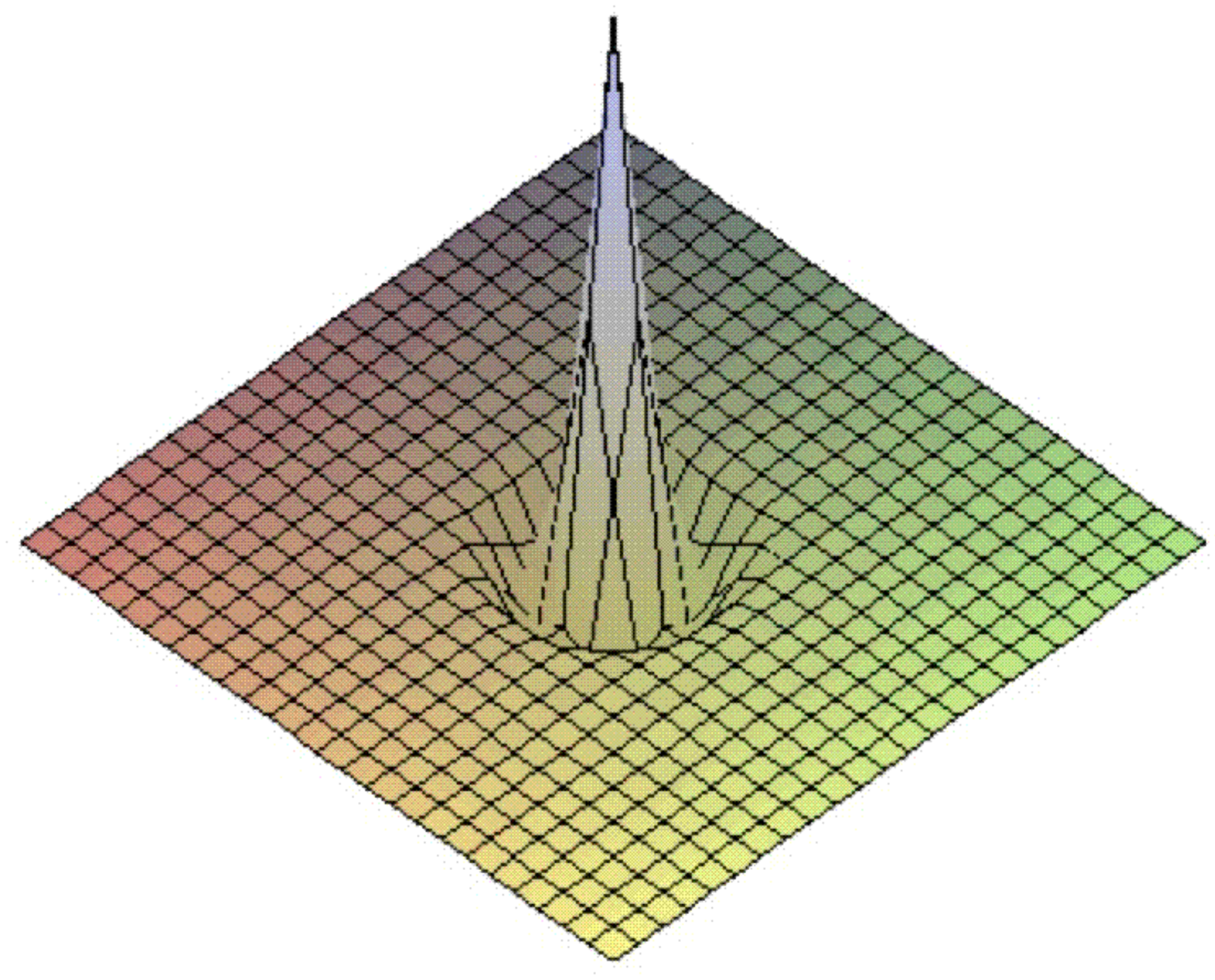}\\
         \end{tabular}\\
        \caption{\label{fig1.pdf}\small  $h_t$ on ${\bf T}^2$ for $t=2$ (left),  $t=1/2$ (middle),
     $t=1/8$ (right)}
  \end{center}
  \end{figure}
Note that the characteristic Mexican hat shape is obtained.  To keep the pictures uncluttered,
we have omitted the axes.  However, one can ask Maple to evaluate $t^2\pi h_t(0,0)$ for various
values of $t$.  When $t=2$, it is $.00070$; when
$t=1$, it is $.59017$; when $t=1/2$, it is $.99984$, and when $t=1/8$,
it is $1.00000$.  This is consistent with the predictions of
(\ref{mxstup}) with $c = 4\pi$, and (\ref{mexhtxx}).\\

Now let us look at the special case ${\bf M} = S^n$, and let $f$ be general for now.
Let ${\bf N} = (1,0,\ldots,0)$, the
``north pole".  We shall now
calculate $K_t({\bf N},y)$ as an explicit infinite series.  (As we have
explained, we will then know $K_t(x,y)$ for all $x,y$.)
Recall (\cite{StWeis71}) that we may write $L^2(S^n) = \bigoplus_{l \geq 0}{\mathcal H}_l$,
where ${\mathcal H}_l$ is the space of spherical harmonics of degree $l$.  If $P \in
{\mathcal H}_l$, then
\[ \Delta P = l(l+n-1) P. \]
Within each space ${\mathcal H}_l$ is a unique {\em zonal harmonic} $Z_l$, which has
the property that for all $P \in {\mathcal H}_l$, $P({\bf N}) =  \langle  P,Z_l  \rangle $.
In particular, $P$ is orthogonal to $Z_l$ if and only if $P({\bf N}) = 0$.

We wish to use (\ref{kerexp}) to evaluate $K_t({\bf N},y)$.  To do so we
choose an orthonormal basis for each ${\mathcal H}_l$, one of whose elements is
$Z_l/\|Z_l\|_2$.
Then any other element of this orthonormal basis vanishes at ${\bf N}$, so we find

\begin{equation}\notag
%\label{kersph1}
K_t({\bf N},y) = \sum_{l=0}^{\infty} f(t^2l(l+n-1)) Z_l({\bf N})Z_l(y)/\|Z_l\|_2^2.
\end{equation}

But surely $Z_l({\bf N}) =  \langle  Z_l,Z_l  \rangle $, so we simply have

\begin{equation}
\label{kersph2}
K_t({\bf N},y) = \sum_{l=0}^{\infty} f(t^2l(l+n-1))Z_l(y).
\end{equation}

However, $Z_l(y)$ is known explicitly.  In fact (\cite{StWeis71}),
if $\omega_n$ is the area of $S^n$, then for some constant $c_l$,

\begin{equation}
\label{zony}
Z_l(y) = c_l P_l^{\lambda}(y_1),
\end{equation}
where $y = (y_1,\ldots,y_n)$, $\lambda = (n-1)/2$, and
$P^{\lambda}_l$ is the ultraspherical (or Gegenbauer)
polynomial of degree $l$ associated with $\lambda$.
The $P^{\lambda}_l$
may be defined in terms of the generating function
\begin{equation}
\label{genfn}
(1-2r\tau+r^2)^{-\lambda} = \sum_{l=0}^{\infty} P_l^{\lambda}(\tau)r^l.
\end{equation}
In particular, if $\tau = 1$, we see that
\[ (1-r)^{-(n-1)} = \sum_{l=0}^{\infty} P_l^{\lambda}(1)r^l, \]
so that

\begin{equation}
\label{pk1}
P_l^{\lambda}(1) = \left(\displaystyle^{n+l-2}_{\ \ \ l\ \ \ }\right) = b_l
\end{equation}
On the other hand,
\begin{equation}
\label{zonn}
Z_l({\bf N}) = [\omega_n]^{-1}\dim {\mathcal H}_l
= [\omega_n]^{-1}\left[\left(\displaystyle^{n+l}_{\ \:n\ }\right)-\left(\displaystyle^{n+l-2}_{\ \ \ l\ \ \ }\right)\right] = d_l.
\end{equation}

Comparing (\ref{zony}),(\ref{pk1}) and (\ref{zonn}), we see that
\begin{equation}
\label{ckeval}
c_l = d_l/b_l = \frac{\left(\displaystyle^{n+l}_{\ \:n\ }\right)-\left(\displaystyle^{n+l-2}_{\ \ \ n\ \ \ }\right)}
{\omega_n\left(\displaystyle^{n+l-2}_{\ \ \ l\ \ \ }\right)} = \frac{n+2l-1}{\omega_n(n-1)}.
\end{equation}

 From (\ref{kersph2}), we find
\begin{equation}
\label{kersph3}
K_t({\bf N},y) = \sum_{l=0}^{\infty}
\frac{(n+2l-1)}{\omega_n(n-1)}f(t^2l(l+n-1)) P_l^{\lambda}(y_1) := h_t(y_1).
\end{equation}
If $x \in S^n$, we can choose a rotation $T$ with $Tx = {\bf N}$.  If also $y \in
S^n$, then by (\ref{rotinv}),
\[ K_t(x,y) = K_t(Tx,Ty) = K_t({\bf N},Ty) = h_t((Ty)_1) = h_t({\bf N}\cdot Ty)
= h_t(Tx \cdot Ty) = h_t(x \cdot y). \]

Thus, if $F \in L^2(S^n)$,
\[ [f(t^2 \Delta) F](x) = \int_{S^n} F(y) h_t(x \cdot y) d\mu(y), \]
the {\em spherical convolution} of $F$ and the axisymmetric function $h_t$.

%Using Maple,
Let us now take $f(s) = se^{-s}$.
 From (\ref{kersph3}), we find
\begin{equation}
\label{kersph4}
h_t(y_1) = K_t({\bf N},y) = \sum_{l=0}^{\infty}
\frac{l(l+n-1)(n+2l-1)}{\omega_n(n-1)}t^2e^{-t^2l(l+n-1)} P_l^{\lambda}(y_1).
\end{equation}
 One can use (\ref{kersph4}) to draw the graph of $h_t$ for any
$t > 0$.  We do so, in the most practical situation, $n = 2$.
We can go all around a great circle by using spherical coordinates, in
which $y_1 = \cos(\theta)$, with $\theta$ going from $-\pi$ to $\pi$.
We draw these graphs when $t = 1$ (Figure 2, left), $t = .1$ (Figure 2, middle)
 and $t = .05$ (Figure 2, right).  Actually, since (\ref{mexhtxx}) predicts
\[ 4\pi h_t(1)  = 4\pi K_t({\bf N}, {\bf N}) \sim 1/t^2,  \]
we draw the graphs of
$4\pi\:h_t(\cos(\theta))$ instead, with $\theta$ going from $-\pi$ to $\pi$
on the horizontal axis.
%% Second set of figs go here
\begin{figure}
  \begin{center}
  \begin{tabular}{ccc}
   \includegraphics[scale=0.30]{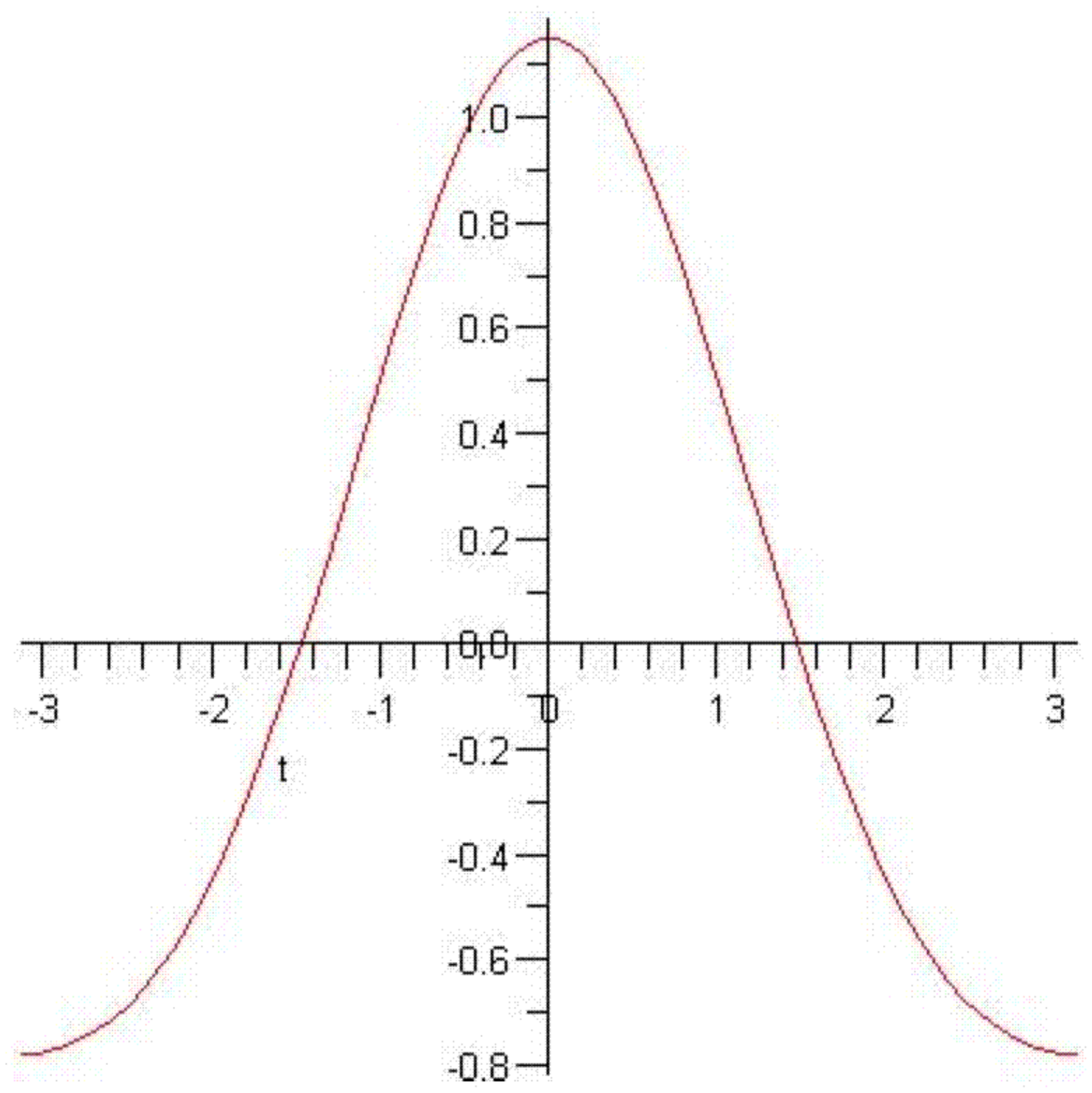}
 % \caption{ $4\pi h_t(\cos \theta)$ on $S^2$ for $t=1$}\label{fig 4}
   \includegraphics[scale=0.35]{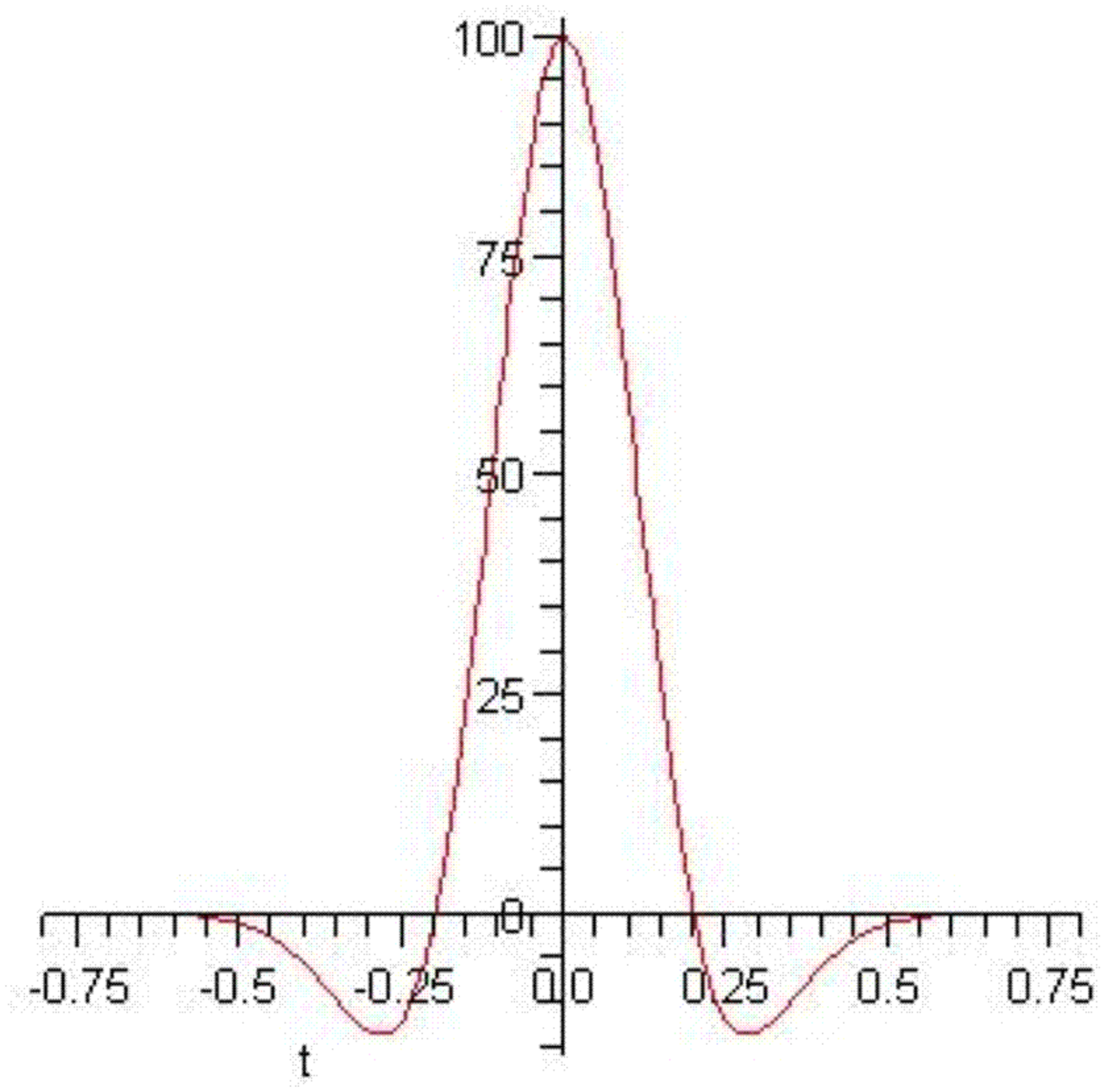}
 %\caption{ $4\pi h_t(\cos \theta)$ on $S^2$ for $t=0.1$}\label{fig5}
     \includegraphics[scale=0.35]{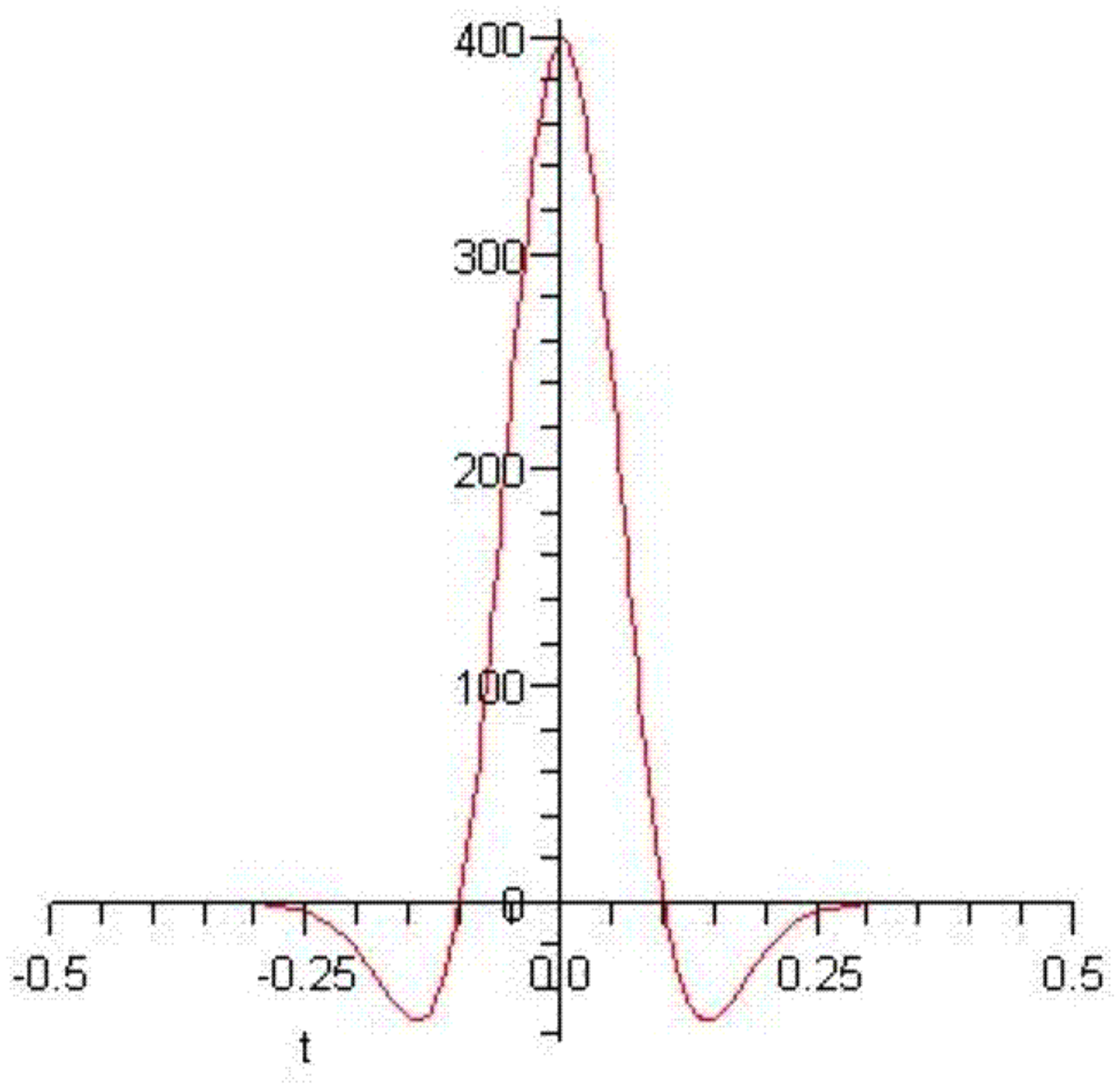}
\end{tabular}
   \caption{ \label{fig2.pdf} $4\pi h_t(\cos \theta)$ on $S^2$ for $t=1$ (left),
  $t=.1$ (middle),
  $t=0.05$ (right) }
 \end{center}
  \end{figure}
   The pictures do bear out the relation $4 \pi h_t(1) \sim 1/t^2$.  We also see the
characteristic ``Mexican hat'' shape, familiar from the analogous situation on $\RR^1$,
where $K_t(x,y)$ is the Schwartz kernel of $f(-t^2 d^2/dx^2)$.  Of course, in that situation,
$\left[f(-t^2 d^2/dx^2)F\right]\hat{\:}(\xi) = f(t^2 \xi^2) \hat{F}(\xi) =
t^2 \xi^2 e^{-t^2 \xi^2} \hat{F}(\xi)$ for $F \in {\mathcal S}(\RR)$,
so $K_t(x,y) = t^{-1} \psi((x-y)/t)$, where $\psi$ is the second derivative of a
Gaussian.

The series (\ref{kersph4}) converges quickly for $t$ large, but not if $t$ is small.  Therefore,
as in the case of the torus, for computational purposes, it is important to have a quickly
converging alternate expression for this function for small $t$.  Since the pictures indicate
that $4\pi h_t(\cos \theta)$ is negligible outside a small neighborhood of $\theta = 0$ if $t$ is small, one would
assume that it is only necessary to compute the Maclaurin series of $h_t$.  It is very fortunate that
any number of terms of this series can be computed explicitly,, through use of the work of
Polterovich \cite{Psph}, together with some additional insights.

In fact, what Polterovich found in \cite{Psph} was the heat trace asymptotics on the sphere, i.e.
explicit formulae for the $a_m$ of ({\ref{heattras}) for the manifold $S^n$, as sums of explicit
finite series, for $m \geq 1$.  (Earlier, less explicit formulae were
found earlier in (\cite{CahnWolf}) and (\cite{Camp}).)
Using Polterovich's formula to evaluate some of these $a_m$ on $S^2$, (it is easiest to use Maple), we find that
\begin{equation}
\label{heattrasx}
\mbox{tr}(e^{-s\Delta}) \sim \frac{1}{s} + \frac{1}{3} + \frac{s}{15} + \frac{4s^2}{315} +
\frac{s^3}{315} + O(s^4);
\end{equation}
there would be no difficulty in evaluating more terms.  (Recall that $a_0$ is given by
(\ref{heattr0}), for general ${\bf M}$.)

Using this formula we are going to evaluate the first few terms of the Maclaurin series of
$4\pi h_t(\cos \theta)$, and
we will show how any number of terms could be obtained.

Let $J_t(x,y)$ be the kernel of $e^{-t^2\Delta}$ on $S^2$, so that, in particular, by
(\ref{indlink1}),  $J_t(x,x) =  \mbox{tr}(e^{-t^2\Delta})/4\pi$.
From (\ref{kersph3}), with $f(r) = e^{-r}$, we find that if $s = t^2$, then
\begin{equation}
\label{kersph5}
J_t({\bf N},y) = \sum_{l=0}^{\infty} \frac{(2l+1)}{4\pi}e^{-sl(l+1)} P_l^{\lambda}(y_1) := g_t(y_1).
\end{equation}
and similarly, from (\ref{kersph4}),
\begin{equation}
\label{kersph6}
\frac{1}{s}K_t({\bf N},y) = \sum_{l=0}^{\infty}
\frac{l(l+1)(2l+1)}{4\pi}e^{-sl(l+1)} P_l^{\lambda}(y_1)=\frac{1}{s}h_t(y_1).
\end{equation}
%Then, again putting $s = t^2$,
%\begin{equation}
%\label{getht}
%\frac{\partial}{\partial s}g_t(\cos \theta) = 2t \frac{\partial}{\partial t}g_t(\cos \theta)= h_t(\cos \theta);
%\end{equation}
%because of this, as we shall see, it will suffice to understand the Maclaurin series of $4\pi g_t(\cos \theta)$.

We would like to understand the Maclaurin series of $4\pi g_t(\cos \theta)$ and $4\pi h_t(\cos \theta)$.
To this end, we shall use the following lemma.
\begin{lemma}
\label{sphlpx}
Suppose $U \in C^2(S^{n})$ is a function of $x_1$ only, $U(x) = u(x_1)$, say,
and that $u$ is in fact $C^2$ in a neighborhood of $[-1,1]$.
Then $\Delta U$ is also a function of $x_1$, and
\begin{equation}
\label{lapgx1}
(\Delta U)(x) = nx_1 u'(x_1) - (1-x_1^2)u''(x_1).
\end{equation}
In particular, if $n=1$, one has
\begin{equation}
\label{lapgx2}
-\frac{d^2}{d\theta^2} [u(\cos \theta)] = (\cos \theta) u'(\cos \theta) - (\sin^2 \theta)u''(\cos \theta).
\end{equation}
\end{lemma}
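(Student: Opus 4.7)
The plan is to exploit rotational symmetry to reduce the problem to a one-variable ODE computation in geodesic spherical coordinates.

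First, I would show that $\Delta U$ depends only on $x_1$. The rotation group $SO(n)$ acting on the last $n$ coordinates $(x_2,\ldots,x_{n+1})$ fixes $x_1$, so $U$ is invariant under this subgroup. Since $\Delta$ commutes with all rotations (as noted in Section \ref{homogeneous-manifolds}), $\Delta U$ is likewise $SO(n)$-invariant, hence a function of $x_1$ alone.

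Next, I would introduce geodesic polar coordinates $(\theta,\omega)$ on $S^n$ based at the north pole, where $\theta\in[0,\pi]$ is the geodesic distance from ${\bf N}$ and $\omega\in S^{n-1}$, so that $x_1=\cos\theta$. In these coordinates the round metric takes the warped-product form $g=d\theta^2+\sin^2\theta\,g_{S^{n-1}}$, so $\sqrt{|g|}=\sin^{n-1}\theta\,\sqrt{|g_{S^{n-1}}|}$. Setting $v(\theta)=u(\cos\theta)$, the function $U$ is independent of $\omega$, so the divergence formula for $\Delta=-|g|^{-1/2}\partial_i(|g|^{1/2}g^{ij}\partial_j)$ collapses to
\begin{equation}\notag
\Delta U=-\frac{1}{\sin^{n-1}\theta}\frac{d}{d\theta}\bigl(\sin^{n-1}\theta\,v'(\theta)\bigr)
=-v''(\theta)-(n-1)\frac{\cos\theta}{\sin\theta}v'(\theta).
\end{equation}

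Third, I would carry out the chain rule: $v'(\theta)=-\sin\theta\,u'(\cos\theta)$ and $v''(\theta)=-\cos\theta\,u'(\cos\theta)+\sin^2\theta\,u''(\cos\theta)$. Substituting gives
\begin{equation}\notag
\Delta U=\cos\theta\,u'(\cos\theta)-\sin^2\theta\,u''(\cos\theta)+(n-1)\cos\theta\,u'(\cos\theta)
= n\cos\theta\,u'(\cos\theta)-\sin^2\theta\,u''(\cos\theta),
\end{equation}
which, on rewriting $\cos\theta=x_1$ and $\sin^2\theta=1-x_1^2$, is exactly (\ref{lapgx1}). The $n=1$ formula (\ref{lapgx2}) is then immediate: on $S^1$, $\Delta=-d^2/d\theta^2$, and the computation of $-v''(\theta)$ just performed gives the stated identity. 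There is no real obstacle here — the only point to be careful about is the sign convention $\Delta=-d^*d$ in use throughout the paper, which flips the sign of the usual divergence-form expression.
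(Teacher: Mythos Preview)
Your argument is correct. The paper, however, takes a different and somewhat more algebraic route: it invokes the identity $\Delta = -\sum_{j<k} W_{jk}^2$ with $W_{jk} = x_j\partial_{x_k}-x_k\partial_{x_j}$ (from \cite{gesph}), notes that only the operators $W_{1k}$ act nontrivially on a function of $x_1$, and computes $W_{1k}^2 U = -x_1 u'(x_1) + x_k^2 u''(x_1)$ directly, then sums over $k=2,\ldots,n+1$ using $\sum_{k\geq 2} x_k^2 = 1-x_1^2$. Your approach via geodesic polar coordinates and the warped-product form of the round metric is equally valid and perhaps more self-contained, since it does not appeal to the cited decomposition of $\Delta$. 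The paper's method has the mild advantage that it works uniformly at the poles, whereas your coordinate computation is strictly valid only for $0<\theta<\pi$; but since both sides of (\ref{lapgx1}) are continuous on $S^n$ (here the hypothesis that $u$ is $C^2$ on a neighborhood of $[-1,1]$ is used), the identity extends to the poles by continuity, so this is not a genuine gap.
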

\begin{proof} By the results of \cite{gesph}, $\Delta = - \sum_{j < k}W_{jk}^2$, where
\[ W_{jk} = x_j\frac{\partial}{\partial x_k} - x_k\frac{\partial}{\partial x_j}. \]
if $1 \leq j,\:k \leq n+1$.
Since $U$ is a function of $x_1$ only we compute
\begin{eqnarray*}
\Delta U & = & - \sum_{k=1}^{n+1} W_{1k}^2 U \\
& = & -\sum_{j=2}^{n+1} [-x_1 u' + x_j^2 u'']
\end{eqnarray*}
which proves (\ref{lapgx1}), since on the unit sphere, $\sum_{j=2}^{n+1} x_j^2 = 1-x_1^2$.
Of course (\ref{lapgx2}) is elementary, but we note that it can also be viewed as a special case of
(\ref{lapgx1}), since, in polar coordinates, the spherical Laplacian on $S^1$ is just
$-\frac{d^2}{d\theta^2}$.   This completes the proof.\end{proof}

In particular, if we set $x_1 = 1$ in (\ref{lapgx1}), and $\theta = 0$ in (\ref{lapgx2}),
we find that
\begin{equation}
\label{d2gth}
-\frac{d^2}{d\theta^2} [u(\cos \theta)]|_{\theta = 0} = \frac{1}{n}(\Delta U)({\bf N}),
\end{equation}
since both sides equal $u'(1)$.
\begin{remark} We shall show below, in Lemma \ref{sphlpall}, that if $u$ is $C^{2m}$ on an
open interval containing $[-1,1]$,  one can similarly obtain
$\frac{d^{2m}}{d\theta^{2m}} [u(\cos \theta)]|_{\theta = 0}$ from a knowledge of
$(\Delta^i U)({\bf N})$ for $i = 1,\ldots,m$.  Thus,
if $u$ is $C^{\infty}$ on an
open interval containing $[-1,1]$,  the entire Maclaurin series of
$u(\cos \theta)$ (regarded as a function of $\theta$) is completely determined
from a knowledge of $(\Delta^i U)({\bf N})$ for $i \geq 0$. (Of course, $u(\cos \theta)$ is an
even function of $\theta$, so all of its odd-order derivatives vanish at $0$.)\end{remark}

For now, let us apply this lemma to $u=g_t$  as in (\ref{kersph5}) or $\frac{1}{s}h_t$ as in
(\ref{kersph6}).  To do so, we first explain why
$g_t$ and $h_t$ have $C^{\infty}$ extensions to an open neighborhood of $[-1,1]$.  For this, it is evidently
sufficient to show that for every $n \geq 1$ and every $m \geq 0$, there exists $c_{n,m}$ with
\[ \left|\frac{d^m}{d\tau^m}P_l^{(n-1)/2}(\tau)\right| \leq c_{n,m} l^{n+2m-1}. \]
From the generating function formula (\ref{genfn}), we see the classical formula that
the derivative of $P_l^{\lambda}$ is $2\lambda P_{l-1}^{\lambda+1}$.  Thus we may assume $m=0$.
By (\ref{zony}) and (\ref{ckeval}), we need only show that, on $S^n$, $|Z_l(y)| \leq C l^n$, and for this, by
(\ref{zonn}), we need only show that $|Z_l(y)| \leq Z_l({\bf N})$ for all $y \in S^n$.
But we may choose an orthogonal transformation $T$ on $\RR^n$ with $T{\bf N} = y$, and then
\begin{equation}\notag
%\label{zktest}
Z_l(y) = Z_l \circ T({\bf N}) = \langle Z_l \circ T, Z_l \rangle \leq \|Z_l \circ T\|_2 \|Z_l\|_2
= \|Z_l\|_2^2  = Z_l({\bf N}),
\end{equation}
as claimed.

Let us return to $S^2$.  Suppose as usual that $t > 0$, and again put $s = t^2$.  By (\ref{d2gth}) and
(\ref{kersph5}) we have that
\begin{align}\notag
%\label{gt2n1}
-\frac{d^2}{d\theta^2} 4\pi[g_t(\cos \theta)]|_{\theta = 0} &= \frac{4\pi}{2}(\Delta_y J_{\sqrt s})({\bf N},{\bf N})
\\\notag
&= -\frac{4\pi}{2} \frac{d}{ds} J_{\sqrt s}({\bf N},{\bf N})\\\notag
& =
-\frac{1}{2}\frac{d}{ds} \mbox{tr}(e^{-s\Delta}).
\end{align}
Thus, the first few terms of the Maclaurin series of $4\pi[g_t(\cos \theta)]$ are
\begin{equation}\notag%\label{gtmac1}
4\pi g_t(\cos \theta) \sim \mbox{tr}(e^{-s\Delta}) + \frac{\theta^2}{4}\frac{d}{ds} \mbox{tr}(e^{-s\Delta}).
\end{equation}
Similarly
\begin{align}\notag%\label{ht2n1}
-\frac{d^2}{d\theta^2} 4\pi[\frac{1}{s}h_t(\cos \theta)]|_{\theta = 0}
&= \frac{4\pi}{2}(\Delta_y \frac{1}{s}K_{\sqrt s})({\bf N},{\bf N})\\\notag
&= -\frac{4\pi}{2} \frac{d}{ds} \frac{1}{s}K_{\sqrt s}({\bf N},{\bf N})\\\notag
& =
+\frac{1}{2}\frac{d^2}{ds^2} \mbox{tr}(e^{-s\Delta}).
\end{align}
Thus, the first few terms of the Maclaurin series of $4\pi[g_t(\cos \theta)]$ and $4\pi[h_t(\cos \theta)]$ are
\begin{equation}
\label{gtmac2}
4\pi g_t(\cos \theta) \sim \mbox{tr}(e^{-s\Delta}) + \frac{\theta^2}{4}\frac{d}{ds} \mbox{tr}(e^{-s\Delta}).
\end{equation}
\begin{equation}
\label{htmac1}
4\pi h_t(\cos \theta) \sim -s\frac{d}{ds}\mbox{tr}(e^{-s\Delta})
-\frac{\theta^2}{4}s\frac{d^2}{ds^2} \mbox{tr}(e^{-s\Delta}).
\end{equation}
In order to put these into a useful form, we now invoke a result of Kannai \cite{K}.
\newline Set $z(\theta) =
(\cos\theta,\sin\theta,0,\ldots,0) \in S^n$.
Since $g_t(\cos \theta)$ equals $J_t({\bf N},z(\theta))$, where $J_t(x,y)$ is the kernel of the
heat operator $e^{-s\Delta}$, Kannai's results imply that
\begin{equation}
\label{gtmack}
4\pi g_t(\cos \theta) \sim \frac{1}{s}e^{-\theta^2/4s}\sum_{j=0}^{\infty}v_j(\theta)s^j
\end{equation}
at least for $0 < |\theta| < \pi$, and where the $v_j$ are smooth functions.  (Kannai's result for general ${\bf M}$,
proved through use of the Hadamard parametrix for the wave equation, is that the kernel of $e^{-s\Delta}$ has the form
\[ \frac{1}{(4\pi s)^{n/2}} e^{-d(x,y)^2/4s}\sum_{j=0}^{\infty}V_j(x,y)s^j \]
for $x$ sufficiently close to $y$, where the $V_j$ are smooth.

In our case the geodesic distance from ${\bf N}$
to $z(\theta)$ is just $|\theta|$, and we have set $v_j(\theta) = V_j({\bf N},z(\theta))$.)

From (\ref{gtmack}) and the fact that (by (\ref{kersph5}) and (\ref{kersph6})), one has
\begin{equation}
\label{gthtrel}
\frac{\partial}{\partial s}g_t(\cos \theta) = -\frac{1}{s} h_t(\cos \theta),
\end{equation}
we are now motivated to find the first few terms in the
Maclaurin series (in $\theta$) of $e^{\theta^2/4s}4\pi g_t(\cos \theta)$ and $e^{\theta^2/4s}4\pi h_t(\cos \theta)$.
For this, we need only multiply the
right sides of (\ref{gtmac2}) and (\ref{htmac1}) by $$e^{\theta^2/4s} \sim \frac{1}{s}(s+\theta^2/4 + \cdots).$$ This yields
\begin{equation}\notag%\label{vjloword}
e^{\theta^2/4s}4\pi g_t(\cos \theta) \sim
\frac{1}{s}[s\mbox{tr}(e^{-s\Delta}) + \frac{\theta^2}{4}[s\frac{d}{ds} \mbox{tr}(e^{-s\Delta}) + \mbox{tr}(e^{-s\Delta})]
\end{equation}
and
\begin{equation}
\label{hjloword}
e^{\theta^2/4s}4\pi h_t(\cos \theta) \sim
-\frac{1}{s}[s^2\frac{d}{ds}\mbox{tr}(e^{-s\Delta}) + \frac{\theta^2}{4}[s^2\frac{d^2}{ds^2} \mbox{tr}(e^{-s\Delta}) +
s\frac{d}{ds}\mbox{tr}(e^{-s\Delta})],
\end{equation}

where the error, for any fixed $s$, is $O(\theta^4)$. Combining this with Polterovich's result (\ref{heattrasx}), and
putting $s = t^2$ again, we obtain the approximations
\begin{equation}
\label{gtapp}
4\pi g_t(\cos \theta) \sim \frac{e^{-\theta^2/4s}}{s}[(1+\frac{s}{3}+\frac{s^2}{15}+\frac{4s^3}{315}+\frac{s^4}{315})
 + \frac{\theta^2}{4}(\frac{1}{3}+\frac{2s}{15}+\frac{4s^2}{105}+\frac{4s^3}{315})]
\end{equation}
It would take considerably more analysis to estimate the error here, but Maple says that when $t=.1$, the error
is never more than $6 \times 10^{-4}$ for any $\theta \in [-\pi,\pi]$, even though both sides have a maximum
of about 100.    Maple says that the greatest error occurs at around $\theta=.3$, where both sides are about
10.655.

Similarly one could use (\ref{hjloword}) to give an approximation to $4\pi h_t(\cos \theta)$.  But Maple says that
the errors are smaller if we differentiate formally with respect to $s$ in (\ref{gtapp}) (and then multiply by $-s$);
here we are recalling (\ref{gthtrel}).  If we do this and finally replace $s$ by $t^2$, this yields the approximation

\begin{equation}
\label{htapp}
4\pi h_t(\cos \theta) \sim \frac{e^{-\theta^2/4t^2}}{t^2}[(1-\frac{\theta^2}{4t^2})p(t,\theta)-t^2q(t,\theta)],
\end{equation}
where
\begin{equation}\notag
%\label{htapp1}
p(t,\theta)=1+\frac{t^2}{3}+\frac{t^4}{15}+\frac{4t^6}{315}+\frac{t^8}{315}+
\frac{\theta^2}{4}(\frac{1}{3}+\frac{2t^2}{15}+\frac{4t^4}{105}+\frac{4t^6}{315})
\end{equation}
and
\begin{equation}\notag
%\label{htapp2}
q(t,\theta)= \frac{1}{3}+\frac{2t^2}{15}+\frac{4t^4}{105}+\frac{4t^6}{315}+
\frac{\theta^2}{4}(\frac{2}{15}+\frac{8t^2}{105}+\frac{4t^4}{105})
\end{equation}

This approximation differs from the one obtained from (\ref{hjloword}) only in terms which are fourth order in $\theta$.
Maple says that when $t=.1$, the error in the approximation (\ref{htapp}) is never more than $9.5 \times 10^{-4}$
for any $\theta \in [-\pi,\pi]$, even though both sides have a maximum of about 100.  Maple says that the
greatest error occurs at around $\theta = .4$, where both sides are about -5.593.  Of course, if in (\ref{htapp})
we approximate $p \sim 1$ and $q \sim 0$, we would obtain the formula for the usual Mexican hat wavelet
on the real line, as a function of $\theta$.

Let us then explain how one can readily obtain any number of terms of the Maclaurin series of $g_t$ and $h_t$.

\begin{lemma}
\label{sphlpall}
For any positive integer $m$, there are constants $a_1,\ldots,a_m$ as follows.
If, in the situation of Lemma \ref{sphlpx}, $u$ is $C^{2m}$ on an
open interval containing $[-1,1]$, then
\begin{equation}\notag
%\label{sphlallway}
\frac{d^{2m}}{d\theta^{2m}} [u(\cos \theta)]|_{\theta = 0} = \sum_{i=1}^m a_i (\Delta^i U)({\bf N}).
\end{equation}
Moreover, $a_m \neq 0$.
\end{lemma}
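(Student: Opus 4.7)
The plan is to iterate Lemma~\ref{sphlpx}, transfer the problem to the polar variable $\theta$ via $v(\theta):=u(\cos\theta)$, and then solve a triangular linear system relating $v^{(2k)}(0)$ to $(\Delta^iU)({\bf N})$.

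First I would note that Lemma~\ref{sphlpx} defines a one-variable operator $(Lu)(x_1) := nx_1 u'(x_1) - (1-x_1^2)u''(x_1)$, and that since $Lu$ is again a $C^{2m-2}$ function of $x_1$, iteration is legitimate and $(\Delta^iU)(x) = (L^iu)(x_1)$ for $i\leq m$. Substituting $x_1=\cos\theta$ and applying the chain rule yields the clean identity
\[
(Lu)(\cos\theta) \;=\; -v''(\theta) - (n-1)\cot\theta\,v'(\theta) \;=:\; (Av)(\theta),
\]
which extends smoothly across $\theta=0$ because $v$ is even, so $v'(\theta)=v''(0)\theta+O(\theta^3)$ absorbs the pole of $\cot\theta$. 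In particular $(\Delta^iU)({\bf N}) = (A^iv)(0)$ for $1\leq i\leq m$.

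The crux is to show that $(A^iv)(0)$, viewed as a linear functional of the even Taylor data $c_k := v^{(2k)}(0)$ for $k\leq i$, has nonzero coefficient on $c_i$. Writing $v(\theta)=\sum_{k\geq 0} c_k\theta^{2k}/(2k)!$ and $\cot\theta = \theta^{-1}+\sum_{j\geq 1}\gamma_j\theta^{2j-1}$, a direct accounting of the $\theta^{2(m-1)}$ coefficient of $Av$ shows
\[
(Av)^{(2(m-1))}(0) \;=\; -\frac{n+2m-2}{2m-1}\,c_m \;+\; (\text{linear in } c_1,\ldots,c_{m-1}).
\]
Iterating (noting that $Av$ is again even, so the same analysis applies at each stage), one gets $(A^iv)(0) = \alpha_i c_i + (\text{linear in } c_1,\ldots,c_{i-1})$, where $\alpha_i = \prod_{k=1}^{i}\bigl(-(n+2k-2)/(2k-1)\bigr)$, and this product is nonzero since $n\geq 1$.

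Hence the $m\times m$ matrix $M_{ik} :=$ (coefficient of $c_k$ in $(A^iv)(0)$) is lower triangular with diagonal entries $\alpha_1,\ldots,\alpha_m$, and is therefore invertible. Inverting the system produces constants $a_1,\ldots,a_m$ with $v^{(2m)}(0) = \sum_{i=1}^m a_i(\Delta^iU)({\bf N})$; triangularity of $M^{-1}$ forces $a_m=1/\alpha_m\neq 0$, as desired. The main obstacle is the bookkeeping step establishing the nonzero leading coefficient: one must correctly handle the $1/\theta$ singularity of $\cot\theta$ against $v'(\theta)=O(\theta)$, extract the contribution to the top Taylor coefficient at each iteration, and check that the resulting rational expression $-(n+2k-2)/(2k-1)$ never vanishes for $n\geq 1$ and $k\geq 1$.
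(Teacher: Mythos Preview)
Your argument is correct and reaches the same triangular-system conclusion as the paper, but by a genuinely different route. The paper works in the $x_1$ variable: it shows by induction that $(\Delta^m U)(x)$ has the form $\sum_{i=1}^{m} p_i(x_1)u^{(i)}(x_1) + \sum_{i=m+1}^{2m}(1-x_1^2)^{i-m}p_i(x_1)u^{(i)}(x_1)$, so that evaluation at ${\bf N}$ kills the second sum and the map $(u'(1),\ldots,u^{(m)}(1)) \mapsto ((\Delta U)({\bf N}),\ldots,(\Delta^m U)({\bf N}))$ is upper triangular; the nonvanishing of the diagonal entry $p_m(1)$ is obtained \emph{indirectly}, by testing against zonal harmonics $Z_l$ and invoking the Vandermonde determinant. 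The $\theta$-derivatives are handled by specializing the same structural fact to $n=1$. By contrast, you pass immediately to the polar form $Av = -v'' -(n-1)\cot\theta\,v'$ and compute the leading Taylor coefficient at each iteration explicitly, obtaining the closed form $\alpha_i=\prod_{k=1}^{i}\bigl(-(n+2k-2)/(2k-1)\bigr)$. Your approach is more hands-on and yields quantitative information (the actual diagonal entries, hence in principle $a_m$ itself), at the cost of the bookkeeping you flag; the paper's approach is slicker but nonconstructive about the value of $p_m(1)$.
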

\begin{proof} It is enough to show that, generalizing (\ref{lapgx1}), there are polynomials
$p_1,\ldots,p_{2m}$ in $x_1$ such that
\begin{equation}
\label{lapgxall}
(\Delta^m U)(x) = \sum_{i=1}^{m} p_i(x_1) u^{(i)}(x_1) + \sum_{i=m+1}^{2m}(1-x_1^2)^{i-m}p_i(x_1) u^{(i)}(x_1)
\end{equation}
where $p_m(1) \neq 0$.  For then the mapping
 $$(u'(1),\cdots,u^{(m)}(1)) \mapsto
 \left((\Delta U)({\bf N}),\cdots,(\Delta^m U)({\bf N}) \right)$$
 will be given by an invertible upper triangular matrix.
In particular this is true if $n=1$, so that the mapping
$$\left(u'(1),\cdots,u^{(m)}(1)\right) \mapsto
\left(\frac{d^2}{d\theta^2} [u(\cos \theta)]|_{\theta = 0} ,\cdots,
\frac{d^{2m}}{d\theta^{2m}} [u(\cos \theta)]|_{\theta = 0} \right)$$
is also given by an invertible upper triangular
matrix.  Thus the map
$$
 \left((\Delta U)({\bf N}),\cdots,(\Delta^m U)({\bf N})\right) \mapsto
\left(\frac{d^2}{d\theta^2} [u(\cos \theta)]|_{\theta = 0} ,\cdots,\frac{d^{2m}}{d\theta^{2m}} [u(\cos \theta)]|_{\theta = 0} \right)$$
is also given by an invertible upper triangular matrix, which is the desired result.

To prove (\ref{lapgxall}), we recall first that by (\ref{lapgx1}), $(\Delta U)(x) = Du(x_1)$, where
$D = nx_1 \frac{d}{dx_1}-(1-x_1^2)\frac{d^2}{dx_1^2}$.  From this, (\ref{lapgxall}), save for the
statement that $p_m(1) \neq 0$, follows at once by a simple induction on $m$.
If $p_m(1)$ were zero, then the mapping
$$ \left(u'(1),\cdots,u^{(m)}(1) \right) \mapsto
 \left((\Delta U)({\bf N}),\cdots,(\Delta^m U)({\bf N})\right)$$
 would be given by a singular upper triangular matrix,
so its range would not be all of $\RR^m$.  But the range is all of $\RR^m$, since it contains
all the vectors
\[ \left((\Delta Z_l)({\bf N}),\ldots,(\Delta^m Z_l)({\bf N})) = Z_l({\bf N})l(l+n-1)(1,l(l+n-1),\ldots,[l(l+n-1)]^{m-1}\right), \]
for $l=1,\ldots,m$, and since the rows of a Vandermonde matrix are linearly independent.  This
contradiction completes the proof.\end{proof}

To obtain further terms of the Maclaurin series of $g_t$ or $h_t$, one need only use Lemma \ref{sphlpall}
together with the fact that applying $\Delta^i$ to the series in (\ref{kersph5}) or (\ref{kersph6})
is the same as applying $(-\partial /\partial s)^i$.  Then Polterovich's formula for the heat trace asymptotics may be
used.\\

\begin{remark}Throughout this discussion, we have been considering (\ref{kersph3}) for $f(s) = se^{-s}$.
If one instead used $f_m(s) = s^me^{-s}$, where $m$ is a nonnegative integer, one could still obtain explicit formulas,
similar to (\ref{htapp}), giving ``generalized'' Mexican hat wavelets on the sphere.)
Indeed, if $h_t^m$ corresponds to $f_m$ in the same way that $h_t$
corresponds to $f= f_1$, we have in place of (\ref{gthtrel}) that
$\frac{\partial^m}{\partial s^m}g_t(\cos \theta) = \frac{(-1)^m}{s^m} h_t^m(\cos \theta)$.
Using $f_m$ in place of $f$ has certain advantages.  In particular,
considering functions (of $\Delta$)
which vanish more quickly at $0$ (such as the $f_m$) is very important in the characterization
of Besov spaces in \cite{gm3}.\end{remark}

\section{Appendix: A Technical Lemma}\label{a-technical-lemma}
\label{cclem}

In the proof of Lemma \ref{manmol}, we used the following fact.
\begin{lemma}
\label{ccamplem}
Suppose $a_0,\ldots,a_L \in \CC$.  Then there exists an even function
$u \in {\mathcal S}(\RR)$, such that $u^{(2l)}(0) = a_l$, whenever $0 \leq l \leq L$,
and such that supp$\: \hat{u} \subseteq (-1,1)$.  (Of course, all {\em odd}
order derivatives of $u$ vanish at $0$.)
\end{lemma}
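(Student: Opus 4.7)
The plan is to reduce the problem to a triangular linear system by starting from a single even Schwartz ``seed'' whose Fourier transform has compact support in $(-1,1)$, and then generating the needed degrees of freedom by multiplying by powers of $x^2$.

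First I would pick an even, nonnegative, nonzero $\phi \in C_c^\infty(-1,1)$ and let $\psi = \check\phi$. By the Fourier inversion formula, $\psi \in \mathcal{S}(\RR)$, $\psi$ is even, and $\hat\psi = \phi$ has support in $(-1,1)$. Moreover $\psi(0) = \int \phi(\xi)\,d\xi > 0$, which will be the crucial nonvanishing needed later. Next, for $0 \leq k \leq L$, define
\begin{equation}\notag
\psi_k(x) = x^{2k}\psi(x).
\end{equation}
Each $\psi_k$ is an even Schwartz function, and its Fourier transform is a constant multiple of $\phi^{(2k)}$; in particular $\supp \widehat{\psi_k} \subseteq \supp \phi \subseteq (-1,1)$. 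So any linear combination $u = \sum_{k=0}^L c_k \psi_k$ will automatically be even, Schwartz, and have $\supp \hat u \subseteq (-1,1)$.

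The remaining task is to choose the coefficients $c_k$ to hit the prescribed values $u^{(2l)}(0) = a_l$ for $0 \leq l \leq L$. Expanding $\psi$ in its Taylor series at $0$ (only even powers appear, since $\psi$ is even) gives
\begin{equation}\notag
\psi_k(x) = \sum_{m=0}^{\infty} \frac{\psi^{(2m)}(0)}{(2m)!}\, x^{2k+2m},
\end{equation}
so
\begin{equation}\notag
\psi_k^{(2l)}(0) = \begin{cases} 0 & l < k, \\ \dfrac{(2l)!}{(2l-2k)!}\,\psi^{(2l-2k)}(0) & l \geq k. \end{cases}
\end{equation}
Thus the matrix $M_{lk} := \psi_k^{(2l)}(0)$ is lower triangular with diagonal entries $M_{kk} = (2k)!\,\psi(0) \neq 0$, hence invertible. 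Solving the linear system $\sum_{k=0}^L M_{lk} c_k = a_l$ for $l = 0, \ldots, L$ produces the required $(c_0,\ldots,c_L)$, and $u = \sum_k c_k \psi_k$ is the desired function. The odd-order derivative condition is automatic from evenness.

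There is no real obstacle here; the only point that needs attention is ensuring $\psi(0) \neq 0$, which is arranged by choosing $\phi$ nonnegative and nontrivial. The rest is a triangular-system argument.
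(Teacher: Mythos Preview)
Your proof is correct and takes a genuinely different route from the paper's. The paper works on the Fourier side: it seeks $h=\hat u\in C_c^\infty(-1,1)$ with prescribed even moments $\int \xi^{2l}h(\xi)\,d\xi$, and builds $h$ as a linear combination of symmetrized bumps concentrated near $L+1$ distinct points $r_1,\ldots,r_{L+1}\in(0,1/2)$. As the bumps concentrate, the moment matrix converges to the Vandermonde matrix $(r_m^{2l})$, which is invertible; a perturbation argument then finishes.

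Your construction stays on the physical side: you take a single even seed $\psi=\check\phi$ with $\psi(0)>0$ and $\hat\psi$ compactly supported, and generate the needed degrees of freedom by multiplying by $x^{2k}$. Because multiplication by $x^{2k}$ contributes a factor that vanishes to order exactly $2k$ at the origin, the resulting derivative matrix is lower triangular with nonzero diagonal, and you solve directly. This is more elementary than the paper's argument---no Vandermonde, no limiting procedure---and the only nontrivial input is arranging $\psi(0)\neq 0$, which you handle cleanly by taking $\phi\geq 0$. The paper's approach, by contrast, has the mild advantage that it never needs to compute derivatives of the seed, only moments of bumps, but at the cost of the perturbation step.
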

\begin{proof}
We construct $h = \hat{u}$.  We need to show that, given
numbers $b_0,\ldots,b_L$, there exists
$h \in C_c^{\infty}(\RR)$, $h$ even, supp$\:h \subseteq (-1,1)$, with
$\int_{-\infty}^{\infty} \xi^{2l} h(\xi) d\xi = b_l$
for $0 \leq l \leq L$.  Select an even function
$\eta \in C_c^{\infty}(\RR)$, with supp$\eta \subseteq (-1,1)$,
and $\int \eta = 1/2$.  Select $L+1$ different numbers $r_1,\ldots, r_{L+1} \in
(0,1/2)$.
For $0 < t < 1$, consider the even function
$\eta_{t,m}(\xi) = t^{-1}[\eta((x+r_m)/t) + \eta((x-r_m)/t)]$.  We claim that if
$t$ is sufficiently small, we may construct our $h$ by setting
$h = \sum_{m=1}^{L+1} c_m \eta_{t,m}$ for suitable $c_1,\ldots c_{L+1}$.  Indeed, as
$t \rightarrow 0^+$, $\eta_{t,m}$ becomes increasingly concentrated near $r_m$ and near $-r_m$,
so that $\int \xi^{2l} [\sum_{m=1}^{L+1} c_m \eta_{t,m}] d\xi \rightarrow
\sum_{m=1}^{L+1} c_m (r_m)^{2l}$.  The existence of suitable $c_1,\ldots c_{L+1}$ now follows
from the nonvanishing of small perturbations of the Vandermonde determinant. \end{proof}

\end{document}